\newtheorem{theorem}{Theorem}[section]
\newtheorem{lemma}[theorem]{Lemma}
\newtheorem{corollary}[theorem]{Corollary}
\newtheorem{proposition}[theorem]{Proposition}
\newtheorem{question}[theorem]{Question}
\theoremstyle{definition}
\newtheorem{remark}[theorem]{Remark}
\newtheorem{definition}[theorem]{Definition}
\newcommand{\bZ}{\mathbb{Z}}
\newcommand{\bQ}{\mathbb{Q}}
\newcommand{\bN}{\mathbb{N}}
\newcommand{\bR}{\mathbb{R}}
\newcommand{\cD}{\mathcal{D}}
\newcommand{\cG}{\mathcal{G}}
\newcommand{\mc}[1]{\mathcal{#1}}
\newcommand{\Aut}{\mathrm{Aut}}
\newcommand{\psl}{\mathrm{PSL}}
\newcommand{\pgl}{\mathrm{PGL}}
\newcommand{\Mtwo}[4]{\ensuremath{\begin{pmatrix} {#1} & {#2} \\ {#3} &
    {#4}\end{pmatrix} }}
\newcommand{\RP}{\mathbb{R}\mathrm{P}}
\newcommand{\Cay}{\mathrm{Cay}}
\newcommand{\Leg}[2]{\left(\begin{smallmatrix}{#1}\\-\\{#2}\end{smallmatrix}\right)}
\newcommand{\acts}{\curvearrowright}
\newcommand{\leftQ}[2]{\left.\raisebox{-.2em}{$#2$}\middle\backslash\raisebox{.2em}{$#1$}\right.}
\newcommand{\dotnorm}{\ \dot{\lhd}\ }
\newcommand{\dotsub}{\ \dot{<}\ }
\begin{document}
\author{Alexander Lubotzky}
\address{Institute of Mathematics, Hebrew University
Jerusalem 91904,
Israel.}
\email{alex.lubotzky@mail.huji.ac.il}

\author{Jason Fox Manning}
\address{Department of Mathematics, 310 Malott Hall, Cornell University, Ithaca, NY 14853}
\email{jfmanning@math.cornell.edu}

\author{Henry Wilton}
\address{Centre for Mathematical Sciences, Wilberforce Road, Cambridge, CB3 0WB}
\email{h.wilton@maths.cam.ac.uk}

\title{Generalized triangle groups, expanders,\\ and a problem of Agol and Wise}

\maketitle

\begin{abstract}
  Answering a question asked by Agol and Wise, we show that a desired stronger form of Wise's malnormal special quotient theorem does not hold.  The counterexamples are generalizations of triangle groups, built using the Ramanujan graphs constructed by Lubotzky--Phillips--Sarnak.
\end{abstract}

\section{Introduction}

Consider the following notorious question in geometric group theory (see for example \cite[5.3.B]{Gromov}, \cite[Question 1.15]{bestvina_questions_????}).

\begin{question}\label{q:hyp rf}
Is every hyperbolic group residually finite?
\end{question}

``Dehn filling'' is a powerful technique for constructing hyperbolic groups.  A \emph{group pair} $(G,\mc{P})$ is a group $G$ together with a finite collection $\mc{P}$ of subgroups of $G$.  The subgroups $\mc{P}$ are referred to as \emph{peripheral groups} of the pair.   A \emph{Dehn filling} of a group pair is a quotient
\[
G(N_1,\ldots,N_n):= G/\llangle \cup_i N_i\rrangle
\]
where, for each $i$, the subgroup $N_i$ is normal in $P_i$.  If each $N_i$ is finite index in $P_i$, the filling is said to be \emph{peripherally finite} or \emph{PF}.  We say that a property $\Pi$ holds for \emph{all sufficiently long} Dehn fillings of $(G,\mc{P})$ if there is a finite subset $B\subseteq G\smallsetminus 1$ so that, whenever $N_i\cap B=\varnothing$ for all $i$, the corresponding Dehn filling $G(N_1,\ldots,N_n)$ has $\Pi$.  If all sufficiently long Dehn fillings either satisfy $\Pi$ or are not PF, we say $\Pi$ holds for \emph{all sufficiently long PF Dehn fillings}.
The archetypal Dehn filling theorem is a far-reaching generalization of Thurston's famous Hyperbolic Dehn Filling theorem to the group-theoretic context.
In the context of PF fillings of hyperbolic groups, it has the following consequence (see Definition \ref{def:malnormal} for the definition of almost malnormal).

\begin{theorem}[ \cite{osin:peripheral} (cf.\ \cite{groves_dehn_2006})]\label{thm: GMO}
Let $G$ be hyperbolic, and let $\mc{P} = \{P_1,\ldots,P_n\}$ be an almost malnormal collection of quasiconvex subgroups.  All sufficiently long PF Dehn fillings
\[
G(N_1,\ldots,N_n):= G/\llangle \cup_i N_i\rrangle
\]
are infinite and hyperbolic.
\end{theorem}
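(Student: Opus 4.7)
The plan is to take the relatively hyperbolic viewpoint and realize Dehn filling geometrically as a horoball quotient. Since $\mc{P}$ is an almost malnormal, quasiconvex collection in the hyperbolic group $G$, a theorem of Bowditch gives that $(G,\mc{P})$ is relatively hyperbolic. I would make this concrete by constructing the Groves--Manning cusped space $X$: start from a Cayley graph of $G$ and attach a combinatorial horoball on each coset of each $P_i$. The hypotheses guarantee that $X$ is $\delta$-hyperbolic, and $G$ acts on $X$ properly with conjugates of the $P_i$ as horoball stabilizers.

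Given a PF filling datum, form $\bar{X} := X/\llangle \cup_i N_i\rrangle$, on which $\bar{G} := G(N_1,\ldots,N_n)$ acts. Outside the horoballs, the quotient looks like the Cayley graph of $\bar{G}$, while inside each horoball the quotient is a combinatorial horoball over the finite group $P_i/N_i$ and so has bounded cross-sections. Truncating these quotient horoballs therefore exhibits $\bar{G}$ as acting cocompactly on a locally finite subgraph of $\bar{X}$, so uniform hyperbolicity of $\bar{X}$ will yield hyperbolicity of $\bar{G}$. For infiniteness, I would argue that provided the obstruction set $B$ is chosen large enough, the $\bar{G}$-action on the truncated space is on an unbounded graph; equivalently, one produces a loxodromic element of $G$ avoiding $B$ whose image in $\bar{G}$ still has infinite order, using a ping-pong argument in the relatively hyperbolic setting.

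The crux of the proof---and the main obstacle---is to show that $\bar{X}$ is uniformly hyperbolic for all sufficiently long fillings. My approach would be a local-to-global hyperbolicity argument: choose $B \subseteq G\setminus\{1\}$ so large that whenever $N_i\cap B = \varnothing$, any minimal-area disc diagram over the filled presentation can be decomposed into subdiagrams supported in individual horoball quotients, each replaceable by a short boundary arc. This reduces $\delta'$-thinness of triangles in $\bar{X}$ to $\delta$-thinness of a lift in $X$, with a bounded correction coming from the finite horoball cross-sections. Assembling this estimate yields a uniform hyperbolicity constant for $\bar{X}$ independent of the filling; this is essentially the technical heart of Osin's relative isoperimetric approach and of Groves--Manning's cusped-space approach, and I expect the bulk of the work to go into verifying the combinatorial geometry of the quotient horoballs and controlling how filling relators can appear in reduced diagrams.
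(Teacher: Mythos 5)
The paper does not prove this statement itself; it is quoted from Osin (cf.\ Groves--Manning), and your sketch---relative hyperbolicity of $(G,\mc{P})$ via Bowditch, the cusped space with combinatorial horoballs, uniform hyperbolicity of the quotient space for long fillings, cocompactness after truncating the finite-cross-section horoball quotients, and survival of a loxodromic element for infiniteness---is exactly the Groves--Manning/Osin route that the paper cites. So your proposal is correct in outline and takes essentially the same approach as the paper's source for this theorem, with the acknowledged technical heart (uniform hyperbolicity of the filled cusped space) being precisely what those references supply.
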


Even if one starts with a residually finite hyperbolic group (even a free group) $G$, there is no reason to believe that the resulting Dehn fillings $G(N_1,\ldots,N_n)$ should be residually finite. (Compare \cite[Theorem 8.1]{kapovich_representations_2005}, in which it is shown that an infinite hyperbolic proper quotient $G$ of a linear group $\Gamma$ need not be linear; this applies, for instance, if $G$ is a Dehn filling of $\Gamma$.)   Theorem \ref{thm: GMO} therefore seems like a promising candidate for constructing non-residually-finite hyperbolic groups.

These considerations made the work of Wise and his coauthors on \emph{virtually special groups} all the more surprising.  A nonpositively curved cube complex $X$ is \emph{special} if there is a locally isometric immersion to the Salvetti complex associated to some right-angled Artin group.  A group is \emph{special} if it is the fundamental group of a compact special cube complex.  A group is \emph{virtually special} if it has a special subgroup of finite index.   Virtually special groups have numerous attractive properties.  For example, they are virtually subgroups of right-angled Artin groups, which are linear.  It follows that virtually special groups are linear, and therefore residually finite.  In addition, an infinite virtually special group has a subgroup of finite index with infinite abelianization.

One of the most important theorems about virtually special groups is Wise's Malnormal Special Quotient Theorem, which can be thought of as a Dehn filling result.  In order to state it, we need one additional piece of terminology about Dehn fillings of a group pair $(G,\mathcal{P})$.

We say that a property $\Pi$ holds for \emph{a positive fraction of all} Dehn fillings if, for each $i$, there is a subgroup $\dot{P}_i< P_i$ of finite index so that, whenever $N_i< \dot{P}_i$ for all $i$, the corresponding Dehn filling $G(N_1,\ldots,N_n)$ has $\Pi$.  The Malnormal Special Quotient theorem can now be stated as follows \cite{Wise} (cf.\ \cite{agm:msqt}).

\begin{theorem}[Wise's Malnormal Special Quotient Theorem]\label{thm: MSQT}
 Let $G$ be hyperbolic and virtually special, and let $\mc{P} = \{P_1,\ldots,P_n\}$ be an almost malnormal collection of quasiconvex subgroups.  A positive fraction of all Dehn fillings 
\[
G(N_1,\ldots,N_n):= G/\llangle \cup_i N_i\rrangle
\]
are hyperbolic and virtually special.
\end{theorem}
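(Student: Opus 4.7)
The plan is to leverage the cubical structure coming from virtual specialness to cubulate the filled group. First I would pass to a torsion-free finite-index normal special subgroup $H \triangleleft G$, with induced peripheral structure $\mc{P}'$ consisting of $H$-conjugacy representatives of intersections $P_i \cap H$. A positive-fraction filling statement for $(H,\mc{P}')$ implies the same for $(G,\mc{P})$: given witnessing subgroups $\dot{P}_j' \leq P_j'$ for $H$, one can choose $\dot{P}_i \leq P_i$ of finite index whose conjugates' intersections with $H$ land inside the relevant $\dot{P}_j'$. So we may assume $G$ itself is the fundamental group of a compact special cube complex $X$, acting cocompactly on its CAT(0) universal cover $\widetilde{X}$, and by quasiconvexity each $P_i$ stabilizes a convex subcomplex $Y_i \subset \widetilde{X}$ on which it acts cocompactly.

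The heart of the proof is a cubical Dehn filling construction. Given finite-index normal $N_i \triangleleft P_i$, I would construct a new CAT(0) cube complex $\widetilde{X}'$ on which the filling $\bar G := G(N_1,\ldots,N_n)$ acts properly and cocompactly, by performing a surgery on the $G$-orbit of each $Y_i$ that identifies $N_i$-cosets and attaches appropriate ``cones''. To make this produce a nonpositively curved complex and ensure the hyperplanes remain embedded and two-sided, one selects $\dot{P}_i \leq P_i$ of finite index so that (a) any two distinct $G$-translates of $Y_i$ that meet substantially are related by an element of $\dot{P}_i$, and (b) the hyperplane stabilizers in $P_i$ acting on $Y_i$ lie inside $\dot{P}_i$; then requiring $N_i \leq \dot{P}_i$ delivers the positive-fraction conclusion. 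Verifying the link condition at vertices after the surgery reduces to a cubical small cancellation hypothesis on the arrangement of the $Y_i$, and hyperbolicity of $\bar G$ together with properness of the action are guaranteed by Theorem \ref{thm: GMO}.

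To promote properness to virtual specialness, I would apply the Haglund-Wise criterion, which reduces the problem to separability of hyperplane stabilizers in $\bar G$. These stabilizers are quasiconvex in the hyperbolic group $\bar G$, and separability can be inherited from separability of quasiconvex subgroups in $G$ (a consequence of the virtual specialness of $G$) combined with the finiteness of each quotient $P_i/N_i$, which lets one transport separability across the filling map. The main obstacle is the cubical surgery itself: controlling how the $G$-translates of the $Y_i$ intersect well enough to verify nonpositive curvature and good hyperplane behavior in $\widetilde{X}'$ is the cubical analogue of small cancellation theory, and is where the bulk of the technical work must reside.
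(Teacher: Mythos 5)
The paper does not prove this statement at all: it is quoted as background and attributed to Wise \cite{Wise}, with an alternate proof in \cite{agm:msqt}, so there is no internal argument to compare against. Your sketch is in the spirit of Wise's cubical small cancellation approach, but as a proof it has genuine gaps. The central one is the cubical surgery step: the claim that, after passing to suitable finite-index $\dot{P}_i$, one can modify $\widetilde{X}$ along the orbits of the $Y_i$ so that the filled group $\bar G$ acts properly and cocompactly on a CAT(0) cube complex is essentially the content of the theorem (this is exactly where Wise's B(6)/cubical small cancellation machinery, or in later work the cubulation of Dehn fillings, lives), and you assert it rather than prove it; in particular your conditions (a) and (b) are nowhere near sufficient to verify the link condition or embeddedness of hyperplanes in the quotient complex.

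The second gap is the promotion from ``cubulated'' to ``virtually special.'' You propose to verify the Haglund--Wise criterion by ``transporting separability across the filling map,'' but separability is not inherited by quotients: the relevant hyperplane stabilizers live in $\bar G$, not in $G$, and establishing any separability (indeed, any residual finiteness) in $\bar G$ is precisely the difficulty the MSQT addresses --- the whole point of the surrounding paper is that fillings of virtually special groups need not retain such properties. The alternative route, invoking the theorem that a hyperbolic group acting properly cocompactly on a CAT(0) cube complex is virtually special, is Agol's theorem, whose proof itself uses the MSQT, so it cannot be used here without circularity. The actual proofs avoid this: Wise derives specialness of the quotient through his quasiconvex hierarchy machinery, and Agol--Groves--Manning show that a positive fraction of fillings inherit a malnormal quasiconvex hierarchy and then apply Wise's hierarchy theorem. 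Finally, your reduction to a torsion-free finite-index special subgroup $H$ also needs care: a filling of $(G,\mc{P})$ does not restrict to a filling of $(H,\mc{P}')$ in the naive way, and controlling the induced peripheral structure and induced fillings is itself a nontrivial part of the published arguments.
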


Thus, remarkably, in the context of virtually special groups, Dehn fillings can be performed that preserve residual finiteness.  This was one of the most important ingredients in Agol's celebrated proof of the Virtual Haken conjecture \cite{agol_virtual_2013}.

Nevertheless, the Malnormal Special Quotient Theorem does not completely rule out the possibility of constructing a non-residually finite hyperbolic group using Dehn filling, since it only applies to a \emph{positive fraction} of all possible Dehn fillings.  As a result of Theorem \ref{thm: GMO}, all sufficiently long PF Dehn fillings of a virtually special group are infinite and hyperbolic, but only a positive fraction of them are guaranteed to be virtually special (and hence residually finite).   One is therefore led to wonder whether the Malnormal Special Quotient Theorem can be given such a form.  This led Ian Agol \cite[Problem 14]{AgolICM} and Daniel Wise \cite[Problem 13.16]{WiseICM} to ask the following question in their 2014 ICM talks.

\begin{question}\label{q:smsqt}
 Let $G$ be hyperbolic and virtually special, and let $\mc{P} = \{P_1,\ldots,P_n\}$ be an almost malnormal collection of quasiconvex subgroups.  Are all sufficiently long PF Dehn fillings
 \[
 G(N_1,\ldots,N_n):= G/\llangle \cup_i N_i\rrangle
 \]
virtually special?
\end{question}
The purpose of the current note is to show that this question has a \emph{negative} answer in some simple situations, meaning that the Malnormal Special Quotient Theorem is in some sense as strong as it can be.   

Our examples will be \emph{$k$--fold triangle groups} (discussed at length in Sections \ref{sec:triangle} and \ref{sec:complexes}).  We briefly give the definition now.
Let $k\geq 2$, and let $G$ be a free product of three copies of $\bZ/k$.\footnote{By $\bZ/k$ we mean the cyclic group of order $k$.}  The collection $\mc{P}=\{P_{12},P_{13},P_{23}\}$ consists of three two-fold free products of copies of $\bZ/k$ obtained each by omitting one of the copies.  Fix a surjection $G\to \bZ/k$ taking each free factor isomorphically to the target, and let $G_0$ be the kernel.  The collection $\mc{P}_0=\{P_{12,0},P_{13,0},P_{23,0}\}$ is the collection of intersections of elements of $\mc{P}$ with $G_0$.  Each $P_{ij,0}$ can be identified with a free group $F_{k-1}$ on $k-1$ generators and with a subgroup of index $k$ in $\bZ/k\ast \bZ/k$.

In Section \ref{sec:triangle}, we define, for normal subgroups of finite index in $P_{ij,0}$ notions of \emph{rotundness} (large girth for some associated graph), and \emph{expansiveness} (good expansion for the associated graph). See Definition \ref{def:rotundexpansive} for the precise definitions.  For the group $G_0$ just described, we prove:
\begin{theorem}\label{thm:thenT}
  If, for each $i \in\{ 1,2,3\}$,  the subgroup $N_{ij,0}\dotnorm P_{ij,0}\cong F_{k-1}$ is rotund and expansive, then $G_0(N_{12,0},N_{13,0},N_{23,0})$ is hyperbolic and has property (T).
\end{theorem}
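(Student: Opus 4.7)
The plan is to construct a compact 2-complex $Y$ with $\pi_1(Y) \cong \bar G := G_0(N_{12,0},N_{13,0},N_{23,0})$ and to derive hyperbolicity and property (T) from combinatorial and spectral features of the universal cover $\widetilde Y$.

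The complex $Y$ is built by starting with a 2-complex whose fundamental group is $G_0$ (obtained as a $k$-fold cover of a presentation 2-complex for $\bZ/k \ast \bZ/k \ast \bZ/k$), and then attaching 2-cells corresponding to generators of the subgroups $N_{ij,0}\dotnorm P_{ij,0}$ in order to realize the Dehn filling. Following the framework developed in Sections \ref{sec:triangle} and \ref{sec:complexes}, the construction should be arranged so that the link of each vertex of $Y$ is a finite graph built directly from the Cayley graphs of the finite quotients $P_{ij,0}/N_{ij,0} \cong F_{k-1}/N_{ij,0}$. With this setup, rotundness translates into large girth of each vertex link, and expansiveness translates into a spectral gap $\lambda_1 > 1/2$ for the normalized Laplacian of each link.

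For property (T), I would invoke Zuk's spectral criterion: since $\bar G$ acts properly and cocompactly on the simply connected complex $\widetilde Y$ and the expansiveness hypothesis ensures that every vertex link has first nontrivial normalized Laplacian eigenvalue strictly greater than $1/2$, it follows that $\bar G$ has property (T). For hyperbolicity, I would run a combinatorial Gauss--Bonnet argument: large girth of the links forces a uniformly positive angle defect at each interior vertex of any reduced disk diagram in $\widetilde Y$, yielding a linear isoperimetric inequality and hence hyperbolicity by a standard argument.

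The main obstacle is the first step: arranging the complex so that the vertex links assemble cleanly from the three individual Cayley graphs of the quotients $P_{ij,0}/N_{ij,0}$ while preserving both the expansion and girth bounds. In particular, the combined vertex link must retain a spectral gap strictly above $1/2$, which is a nontrivial requirement on how the three pieces are glued together. This constraint presumably motivates the specific forms of the rotundness and expansiveness definitions in Section \ref{sec:triangle}, and once those definitions are in place, the deduction of property (T) and hyperbolicity should follow from the standard spectral and combinatorial machinery sketched above.
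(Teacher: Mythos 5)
There is a genuine gap, and it is the one you yourself flag as ``the main obstacle'': you never construct a simply connected (let alone contractible) $2$--complex on which $G_0(N_{12,0},N_{13,0},N_{23,0})$ acts properly cocompactly and whose vertex links are the graphs to which rotundness and expansiveness refer. The complex you propose --- a presentation complex for $G_0$ with $2$--cells attached along generators of the $N_{ij,0}$ --- does have the right fundamental group, but its vertex links are not the quotient graphs $\leftQ{T_k}{N_{ij,0}}$ of the Bass--Serre tree, so neither the girth hypothesis nor the spectral gap hypothesis says anything about it, and its universal cover has no reason to be contractible. Both of your intended endgames (a Gauss--Bonnet/isoperimetric argument for hyperbolicity, and a \.Zuk/Ballmann--\'Swi\k{a}tkowski criterion for (T)) need exactly the complex you have not built, so the proof does not close.

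The missing idea is the complex-of-groups framework and, crucially, a \emph{developability} argument. The paper realizes the filled group as the fundamental group of a triangle-shaped complex of groups $\cD_0(L,M,R)$ whose vertex groups are the finite quotients $P_{ij,0}/N_{ij,0}$; a complex of groups need not be developable, and this is precisely where rotundness is used: with minimal girth $n\geq 7$ one metrizes the triangles as hyperbolic triangles with angles $2\pi/n$, so each local development is a hyperbolic cone on $\leftQ{T_k}{N}$ and is locally CAT$(-1)$, whence Bridson--Haefliger's theorem gives a CAT$(-1)$, hence contractible, development with proper cocompact action and with each vertex link isomorphic to one of the three graphs $\leftQ{T_k}{N}$. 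Hyperbolicity is then immediate from the CAT$(-1)$ metric (no diagram argument needed), and expansiveness feeds directly into the Ballmann--\'Swi\k{a}tkowski criterion. Two further corrections to your sketch: each vertex of the development has link isomorphic to a single one of the three graphs, so there is no issue of ``gluing three pieces'' while retaining the gap $\lambda_1>\frac12$; and those links are quotients of the Bass--Serre tree by $N_{ij,0}$ (coset-type graphs), not Cayley graphs of the finite quotient groups.
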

In fact (see Theorem \ref{thm:rotundandT}) rotundness alone suffices for hyperbolicity; that some lower bound on girth suffices can also be seen from Theorem \ref{thm: GMO}.

In Section \ref{sec:expanders}, we use the Ramanujan graphs constructed in \cite{LPS88} to show the following proposition (note that the group $P_{ij,0}$ is free of rank $k-1$).
\begin{proposition}\label{thm:existexpanders}
  There exists $k\geq 18$ and, for each $1\leq i<j\leq 3$, a sequence $\{K_{ij,n}\}_{n\in \bN}$ of normal, rotund, expansive subgroups of $P_{ij,0}$ so that $\bigcap_{n\in \bN} K_{ij,n} = \{1\}$.
\end{proposition}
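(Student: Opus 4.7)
The plan is to realize the subgroups $K_{ij,n}$ as kernels of the congruence surjections onto $\psl_2(\mathbb{F}_{q_n})$ arising in the Lubotzky--Phillips--Sarnak (LPS) construction \cite{LPS88}. Fix a prime $p \equiv 1 \pmod 4$ with $(p+3)/2 \geq 18$; for concreteness take $p = 37$, so that $k = 20$. By Jacobi's four-squares theorem there are exactly $p+1$ integer quaternions of norm $p$ satisfying the standard LPS normalization, and modulo $\pm 1$ they group into $(p+1)/2$ inverse pairs $\alpha_1^{\pm 1}, \dots, \alpha_{(p+1)/2}^{\pm 1}$ generating a free subgroup $\Lambda$ of rank $(p+1)/2 = k-1$ in the group of rational Hamilton quaternions modulo scalars. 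Fix once and for all an identification $F_{k-1} \cong \Lambda$ sending a chosen free basis to $\alpha_1, \dots, \alpha_{k-1}$.

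For each prime $q$ with $\Leg{p}{q} = 1$, reduction of the $\alpha_i$ modulo $q$ yields a surjection $\Lambda \to \psl_2(\mathbb{F}_q)$ whose Cayley graph with respect to the images of $\alpha_i^{\pm 1}$ is the LPS graph $X^{p,q}$; by \cite{LPS88} the graph $X^{p,q}$ is $(p+1)$-regular, Ramanujan, and has girth at least $\tfrac{2}{3}\log_p |\psl_2(\mathbb{F}_q)|$. Enumerating an infinite family of such primes $q_1 < q_2 < \cdots$ (which exists by Dirichlet's theorem on primes in arithmetic progressions), let $K_n \dotnorm F_{k-1}$ denote the kernel of the $n$-th reduction. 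The quotient Cayley graphs are Ramanujan with girths tending to infinity, and any fixed nontrivial element of $F_{k-1}$ has finite word length in $\alpha_1, \dots, \alpha_{k-1}$ and hence lies outside $K_n$ for all sufficiently large $n$; thus $\bigcap_n K_n = \{1\}$. Finally, fix identifications $\phi_{ij} \co P_{ij,0} \to F_{k-1}$ for each of the three pairs and set $K_{ij,n} := \phi_{ij}^{-1}(K_n)$; these inherit normality, rotundness, and expansiveness from $K_n$.

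The main obstacle I anticipate is reconciling the specific notions of \emph{rotund} and \emph{expansive} in Definition \ref{def:rotundexpansive} with the standard Ramanujan and girth properties of $X^{p,q}$. The graph associated in Section \ref{sec:triangle} to a normal subgroup $N \dotnorm P_{ij,0}$ is likely a bipartite, Schreier-type graph recording the action of the full peripheral group $P_{ij}/\llangle N\rrangle$ rather than simply the Cayley graph of the quotient $P_{ij,0}/N$, so the identifications $\phi_{ij}$ must be chosen carefully and a small combinatorial argument will be needed to show that rotundness and expansiveness for the associated graph genuinely follow from the LPS bounds for $X^{p,q}$. Once this translation is in place, the spectral gap and girth bound for $X^{p,q}$ transfer directly, giving the required properties of $K_{ij,n}$ uniformly in $n$.
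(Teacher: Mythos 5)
There is a genuine gap, and it sits exactly where you flagged it: the ``small combinatorial argument'' reconciling the LPS bounds with Definition \ref{def:rotundexpansive} is the whole content of the proof, and in the setup you propose it cannot be supplied. Rotundness and expansiveness are properties of the graph $\leftQ{T_k}{N}$ for the \emph{given} action of $P_{ij,0}$ on the $k$--regular Bass--Serre tree of $\bZ/k\ast\bZ/k$ (the paper stresses that these notions depend on that action), so pulling back congruence kernels through an arbitrary abstract isomorphism $\phi_{ij}\co P_{ij,0}\to\Lambda$ transfers nothing: the graph $\leftQ{T_k}{\phi_{ij}^{-1}(K_n)}$ depends on where $\phi_{ij}$ sends the Bass--Serre generating data, not just on the abstract quotient $\psl_2(\mathbb{F}_q)$. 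Worse, your numerology makes a match impossible. You set $k-1=(p+1)/2$, so the relevant graphs must be $k=(p+3)/2$--regular and bipartite (every quotient of $T_k$ by a subgroup of $P_{ij,0}$ is bipartite, since $P_{ij,0}$ acts with two vertex orbits), whereas the LPS graphs $X^{p,q}$ you invoke are $(p+1)$--regular quotients of $T_{p+1}$ by a \emph{vertex-transitive} lattice, and with your choice $\Leg{p}{q}=1$ they are non-bipartite Cayley graphs of $\psl_2(\mathbb{F}_q)$. Their girth and spectral gap simply are not statements about the graphs in Definition \ref{def:rotundexpansive}.

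The paper resolves this by a different, geometric identification rather than a combinatorial patch. One takes $k=p+1$, realizes the LPS lattice $\Gamma$ (free of rank $(p+1)/2$, acting simply transitively on $T_{p+1}$, so $T_{p+1}\cong\Cay(\Gamma,\tilde S)$), and then passes to the index-two subgroup $\Delta=\Gamma\cap\Aut^+(T_{p+1})$, which is free of rank $p=k-1$ and acts on $T_{p+1}$ with quotient two vertices joined by $k$ edges --- the same quotient as $P_0\acts T_k$ in Lemma \ref{lem2}. Lemma \ref{lem1} then gives a conjugacy in $\Aut(T_{p+1})$, i.e.\ an isomorphism $\Delta\to P_0$ together with an \emph{equivariant} tree isomorphism; this equivariance is what lets girth and $\lambda_1$ of $\leftQ{T_{p+1}}{\Gamma_0(2q)}$ be read off as rotundness and expansiveness of the corresponding subgroups of $P_0$. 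Finally, the Legendre condition is the opposite of yours: one needs $\Leg{p}{q}=-1$, so that $\Gamma_0(2q)$ preserves the two-coloring and hence lies in $\Delta$, and the quotient graphs are the \emph{bipartite} $k$--regular Ramanujan graphs with $\lambda_1\geq 1-\tfrac{2\sqrt{k-1}}{k}>\tfrac12$ (using $k\geq 18$) and girth at least $\tfrac43\log_p q$; taking $p=17$ and $q=3^l$ gives a nested family with trivial intersection. Your argument for $\bigcap_n K_n=\{1\}$ is fine in spirit, but the construction of rotund, expansive subgroups needs the equivariant identification and the bipartite case, neither of which is available in your setup.
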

Note that each group $P_{ij,0}$ in the statement of the Proposition is free of rank $k-1$, and that the resulting graphs are $k$--valent.  The possible $k$ include $p+1$ for any prime $p\geq 17$ so that $p\equiv 1\pmod{4}$.

\begin{corollary}\label{cor:no}
The answer to Question \ref{q:smsqt} is `no'.
\end{corollary}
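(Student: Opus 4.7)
The plan is to combine Theorem \ref{thm:thenT}, Proposition \ref{thm:existexpanders}, and Theorem \ref{thm: GMO} to manufacture PF Dehn fillings of $(G_0,\mathcal{P}_0)$ that refute Question \ref{q:smsqt}.

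First I would verify that $(G_0,\mathcal{P}_0)$ is a legitimate instance of Question \ref{q:smsqt}. The group $G$ is a free product of finite cyclic groups and is therefore virtually free, so in particular it is hyperbolic and virtually special; both properties pass to the finite-index subgroup $G_0$. Each $P_{ij,0}$ is a finitely generated subgroup of the virtually free group $G_0$ and is therefore quasiconvex, while the almost malnormality of $\mathcal{P}_0$ is inherited from the standard peripheral structure of $\mathcal{P}$ in the free product $G$.

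Next, I would fix $k$ and the sequences $\{K_{ij,n}\}_{n\in\bN}$ produced by Proposition \ref{thm:existexpanders}, and set $N_{ij,n}:=K_{ij,n}$. Each $K_{ij,n}$ is normal of finite index in $P_{ij,0}$ (finite index being packaged into expansiveness), so the quotients $Q_n:=G_0(N_{12,n},N_{13,n},N_{23,n})$ are PF Dehn fillings of $(G_0,\mathcal{P}_0)$. The condition $\bigcap_n K_{ij,n}=\{1\}$ ensures that, for any finite set $B\subseteq G_0\smallsetminus\{1\}$, all sufficiently large $n$ satisfy $K_{ij,n}\cap B=\varnothing$ for each $ij$.

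Finally, Theorem \ref{thm: GMO} applied to $(G_0,\mathcal{P}_0)$ guarantees that for all sufficiently large $n$ the filling $Q_n$ is infinite and hyperbolic, and Theorem \ref{thm:thenT} tells us that each $Q_n$ has property (T). As noted in the introduction, every infinite virtually special group contains a finite-index subgroup with infinite abelianization; since property (T) is inherited by finite-index subgroups and forces finite abelianization, no infinite group with property (T) can be virtually special. Thus, no matter which finite subset $B\subseteq G_0\smallsetminus\{1\}$ is offered as a candidate witness for a positive answer to Question \ref{q:smsqt}, we can choose $n$ large enough that $Q_n$ is a PF Dehn filling with $N_{ij,n}\cap B=\varnothing$ yet failing to be virtually special. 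The main obstacles have all been absorbed into Theorem \ref{thm:thenT} and Proposition \ref{thm:existexpanders}; the deduction here is simply the assembly of these inputs with the standard Dehn filling theorem.
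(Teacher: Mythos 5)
Your proposal is correct and follows essentially the same route as the paper's proof: take the rotund, expansive subgroups $K_{ij,n}$ from Proposition \ref{thm:existexpanders}, use $\bigcap_n K_{ij,n}=\{1\}$ to see that any purported finite witness set $B$ is eventually avoided, and then contradict a hypothetical `yes' answer by combining infiniteness and hyperbolicity (Theorem \ref{thm: GMO}) with property (T) (Theorem \ref{thm:thenT}) against the fact that an infinite virtually special group virtually has infinite abelianization. The only cosmetic difference is that you spell out the quasiconvexity/almost-malnormality verification and the finite-index-(T) step, which the paper compresses into Lemma \ref{lem:malnormal} and a one-line contradiction.
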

\begin{proof}
  Fix a $k$ as in Proposition \ref{thm:existexpanders}.  The pair $(G_0,\mc{P}_0)$ we have just described satisfies:
  \begin{enumerate}
  \item $G_0$ is free, hence hyperbolic.
  \item The elements of $\mc{P}_0$ are quasiconvex.
  \item $\mc{P}_0$ is a malnormal collection.
  \end{enumerate}
  Suppose the answer were `yes'.    Then for some $j$, the quotient $G_0(K_{12,j},K_{13,j},K_{23,j})$ is an infinite virtually special group; in particular it has a finite index subgroup with infinite abelianization \cite{antolin_tits_2015}.  This contradicts property (T).
\end{proof}
It is interesting to point out that the solution to this group theoretic problem relies essentially on number theory, via the construction in \cite{LPS88}.

\begin{question}
Are the examples from Corollary \ref{cor:no} virtually torsion-free?  Residually finite?  Linear?
\end{question}

\subsection{Conventions}
We use the notation $A\dotsub B$ to indicate that $A$ is finite index in $B$, and $A\dotnorm B$ to indicate that $A$ is a finite index normal subgroup of $B$.  If $G$ is a group and $S\subset G$, we use the notation $\llangle S\rrangle$ to denote the normal closure of $S$ in $G$.

\subsection*{Acknowledgments}
The first author was supported by ERC, NSF, and BSF grants.
The second and third authors are grateful to the Mathematical Sciences Research Institute in Berkeley, California, where this project was started during a special semester on Geometric Group Theory in Fall 2016, funded under NSF grant DMS-1440140.  The second author is partially funded by NSF grant DMS-1462263.  The third author is partially funded by EPSRC Standard Grant number EP/L026481/1.  This paper was completed while the third author was participating in the \emph{Non-positive curvature, group actions and cohomology} programme at the Isaac Newton Institute, funded by EPSRC Grant number EP/K032208/1. 

\section{$k$--fold triangle groups}\label{sec:triangle}
In this section, we describe a generalization of the classical triangle groups which we will use to prove the main result of the paper (Corollary \ref{cor:no}).  To motivate, let us recall first the classical (hyperbolic) triangle groups.

Let $l\geq m\geq r \geq 2$ be integers, so that $\frac{1}{l}+\frac{1}{m}+\frac{1}{r}<1$.  Then there exists an essentially unique hyperbolic triangle  with angles $\frac{\pi}{l}$, $\frac{\pi}{m}$, and $\frac{\pi}{r}$.  The group generated by reflections in the sides of this triangle is a cocompact lattice in $SO(2,1)$, with group presentation given by the Poincar\'e polyhedron theorem:
\begin{equation} 
 \Delta(l,m,r) = \langle x_1,x_2,x_3 \mid x_1^2 = x_2^2 = x_3^2 = (x_1x_2)^l = (x_2x_3)^m = (x_1x_3)^r = 1\rangle 
\end{equation}
The group of orientation-preserving elements in $\Delta(l,m,r)$ has index $2$, and the following presentation:
\begin{equation}
  \Delta_0(l,m,r) = \langle a_1,a_2,a_3 \mid a_1^l = a_2^m = a_3^m = a_1a_2a_3 = 1\rangle 
\end{equation}
Both $\Delta(l,m,r)$ and $\Delta_0(l,m,r)$ are often called triangle groups.  Sometimes $\Delta_0$ is called an \emph{ordinary} triangle group or a \emph{von Dyck} group.

Let us propose a generalization of the triangle groups, generated by elements of order $k$ instead of involutions.  We first fix parent groups $G$ and $G_0$, which generalize the orbifold fundamental groups of a mirrored ideal triangle and a pair of pants, respectively.  We also specify some peripheral subgroups.
\begin{definition}[The parent groups]\label{parentgroups}
  Fix $k\geq 2$.  Let $G$ be the free product of three copies of $\bZ/k$,  
  \[ G = \langle x_1,x_2,x_3 \mid x_1^k = x_2^k = x_3^k \rangle. \]
  For $i<j$, let $P_{ij} = \langle x_i,x_j\rangle < G$.
  Let $G_0$ be the kernel of the map $G\to \bZ/k$ taking $x_i$ to $\bar{1}$ for each $i$.  For $i<j$ let $P_{ij,0}= P_{ij}\cap G_0$.
  Then $G_0$ is free of rank $2k - 2$, and each $P_{ij,0}$ is a free factor of rank $k-1$.
\end{definition}

\begin{definition}[Ordinary triangle groups]\label{ordinary}
  Let $L\dotnorm P_{12,0}$, $M \dotnorm P_{13,0}$, and $R \dotnorm P_{23,0}$.  Let $K_0$ be the normal closure of $L\cup M\cup R$ in $G_0$, and define the \emph{(ordinary) $k$--fold triangle group}:
  \[ G_0(L,M,R) = G_0 / K_0. \]
\end{definition}
We will often omit the word `ordinary'.
Notice that the $2$--fold triangle groups are the classical triangle groups, where we take $L = \langle (x_1x_2)^l\rangle$, $M = \langle (x_1x_3)^m\rangle$, and $R = \langle (x_2x_3)^r\rangle$.

Likewise, for $L$, $M$, $R$  normal subgroups in the $P_{ij}<G$, we can define analogues of the triangle groups $\Delta(l,m,r)$.

\begin{definition}[Extended triangle groups]\label{extendedtriangle}
  Suppose that $L\dotnorm P_{12}$, $M\dotnorm P_{13}$, and $R\dotnorm P_{23}$.  Let $K$ be the normal closure of $L\cup M\cup R$ in $G$.  Then we define the \emph{extended $k$--fold triangle group}:
 \[ G(L,M,R) = G/K. \]
\end{definition}

We next note that, if the subgroup $L$, $M$, $R$ are normal subgroups of both the $P_{ij}$ and the $P_{ij,0}$, then the ordinary triangle groups and extended triangle groups are related as one expects.

\begin{lemma}
  Suppose that $L\dotnorm P_{12}$ and $L<P_{12,0}$, that $M\dotnorm P_{13}$ and $M<P_{13,0}$, and that $R\dotnorm P_{23}$ and $R<P_{23,0}$. Then the normal closure of $L\cup M\cup R$ in $G_0$ is equal to $K$.
\end{lemma}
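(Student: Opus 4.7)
The plan is to show that $K_0$, the normal closure of $L\cup M\cup R$ in $G_0$, is actually already normal in $G$; since $K_0$ contains $L\cup M\cup R$ and is contained in $K$, the normality in $G$ will force the equality $K_0=K$.

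First I would record the key structural observation: for each pair $i<j$, the peripheral subgroup $P_{ij}$ surjects onto the quotient $G/G_0\cong\bZ/k$, because $x_i\in P_{ij}$ maps to a generator of $\bZ/k$. Consequently $G=G_0\cdot P_{ij}$, so every element $g\in G$ admits a decomposition $g=hp$ with $h\in G_0$ and $p\in P_{ij}$. This is the mechanism that lets us trade conjugation in $G$ for conjugation in $G_0$.

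With this in hand, I would verify that $K_0$ is normal in $G$ by checking that each generating conjugate of $K$ lies in $K_0$. Take, for example, $g\ell g^{-1}$ with $\ell\in L$ and $g\in G$. Writing $g=hp$ with $h\in G_0$ and $p\in P_{12}$, we use that $L\dotnorm P_{12}$ to conclude $p\ell p^{-1}\in L$, so $g\ell g^{-1}=h(p\ell p^{-1})h^{-1}\in hLh^{-1}\subseteq K_0$. The same argument, replacing $(L,P_{12})$ by $(M,P_{13})$ or $(R,P_{23})$, handles the other two families of conjugates. Hence $K\subseteq K_0$. Since $K$ is normal in $G$ and contains $L\cup M\cup R$, the reverse inclusion $K_0\subseteq K$ is immediate, giving $K=K_0$.

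I do not expect any genuine obstacle here; the lemma is essentially a bookkeeping statement. The only point requiring a little care is ensuring that the hypothesis $L<P_{12,0}$ (and the analogues for $M,R$) is actually used — it is, implicitly, since $K_0$ is defined as a normal closure inside $G_0$ and we want the generators $L\cup M\cup R$ to live in $G_0$ to begin with; otherwise the $G_0$-normal closure would not even make sense as a subgroup of $G$ containing $L\cup M\cup R$ as subsets.
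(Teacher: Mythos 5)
Your proof is correct and rests on the same mechanism as the paper's: using that $L\dotnorm P_{12}$ (and similarly for $M,R$) together with the fact that $P_{12}$ surjects onto $G/G_0$, one trades $G$-conjugation for $G_0$-conjugation to get $K\subseteq K_0$. The paper phrases this slightly differently --- it reduces to the single normal closure $\llangle L\rrangle$ and checks invariance under conjugation by the coset representative $x_1$ rather than decomposing an arbitrary $g=hp$ with $p\in P_{12}$ --- but this is the same argument in substance.
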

\begin{proof}
  Clearly $K_0 = \llangle L\cup M\cup R \rrangle_{G_0} < K = \llangle L\cup M\cup R \rrangle_G$.  But we can write $K_0 = K_{L,0} K_{M,0} K_{R,0}$, and $K = K_LK_MK_R$ where $K_{L,0} = \llangle L\rrangle_{G_0}$, $K_L = \llangle L\rrangle_G$, and so on.  Thus it suffices to show, for example, that $K_{L,0} = K_L$, that the normal closures of $L$ in $G$ and $G_0$ coincide.  Since $\langle x_1\rangle$ maps onto $G/G_0$, it is enough to show that $x_1^{-1}K_{L,0}x_1 = K_{L,0}$.  
  Consider a generator $z = x^{-1} y x$ of $K_{L,0}$, where $y\in L$, and $x\in G_0$.  Then $x_1^{-1}x x_1\in G_0$, since $G_0\lhd G$, and $x_1^{-1}y x_1 \in L$, since $L\lhd P_{12}= \langle x_1,x_2 \rangle < G$.  Thus $x_1^{-1} z x_1\in K_{L,0}$.  Since $z$ was arbitrary, we have shown that $K_{L,0}$ is normal in $G$, and so equal to $K_L$, as desired.
\end{proof}

\begin{corollary}
If the ordinary and extended triangle groups are both defined, then $G_0(L,M,R)$ is a subgroup of index $k$ in $G(L,M,R)$.
\end{corollary}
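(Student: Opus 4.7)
The plan is to deduce the corollary from the preceding lemma together with the Third Isomorphism Theorem; there is no real obstacle, and the proof should take only a few lines.

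First I would observe that all three subgroups $L$, $M$, $R$ are contained in $G_0$, since by hypothesis $L < P_{12,0} \subseteq G_0$, $M < P_{13,0} \subseteq G_0$, and $R < P_{23,0} \subseteq G_0$. Because $G_0$ is normal in $G$, the $G$-normal closure $K = \llangle L\cup M \cup R \rrangle_G$ is still contained in $G_0$. Moreover, the previous lemma tells us that $K = K_0 = \llangle L\cup M \cup R\rrangle_{G_0}$, so in particular $K$ is a normal subgroup of both $G_0$ and $G$. Thus both quotient groups are defined, and $G_0(L,M,R) = G_0/K$, while $G(L,M,R) = G/K$.

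Next I would use the inclusion $G_0 \hookrightarrow G$ to produce a homomorphism $G_0/K \to G/K$, well-defined because $K$ is normal in $G$. Injectivity is immediate: if $g \in G_0$ maps to the trivial coset in $G/K$, then $g \in K$, hence $g$ is already trivial in $G_0/K$. So $G_0/K$ embeds as the image of $G_0$ in $G/K$.

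Finally, to compute the index, I would invoke the correspondence theorem (or equivalently, the Third Isomorphism Theorem): subgroups of $G/K$ correspond to subgroups of $G$ containing $K$, preserving index, so
\[
[G(L,M,R) : G_0(L,M,R)] = [G/K : G_0/K] = [G : G_0] = k,
\]
where the last equality uses Definition \ref{parentgroups}, in which $G_0$ is defined as the kernel of a surjection $G \to \bZ/k$. This completes the proof.
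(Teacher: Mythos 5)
Your argument is correct and is exactly the route the paper intends: the corollary is stated as an immediate consequence of the preceding lemma, which gives $K_0=K$, so that $G_0(L,M,R)=G_0/K$ embeds in $G(L,M,R)=G/K$ with index $[G:G_0]=k$. Your write-up simply makes explicit the steps (that $K\subseteq G_0$, injectivity of the induced map, and preservation of index) that the paper leaves to the reader.
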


\subsection{Malnormality}
\begin{definition}\label{def:malnormal}
  Let $H$ be a group, and $\mc{Q}$ a collection of subgroups of $H$.  Then $\mc{Q}$ is \emph{malnormal} if whenever $h\in H$, $Q,Q'\in \mc{Q}$, and $Q\cap hQ'g^{-1}$ is nontrivial, then $Q=Q'$ and $h\in Q$.

  The collection is \emph{almost malnormal} if whenever $h\in H$, $Q,Q'\in \mc{Q}$, and $Q\cap hQ'h^{-1}$ is infinite, then $Q=Q'$ and $h\in Q$.
\end{definition}

We make the following observation, whose (easy) proof is left to the reader.
\begin{lemma}\label{lem:malnormal}
  With the notation of Definition \ref{parentgroups}, the collection $\mc{P}= \{P_{12},P_{13},P_{23}\}$ is almost malnormal in $G$, and $\mc{P}_0=\{P_{12,0},P_{13,0},P_{23,0}\}$ is malnormal in $G_0$.
\end{lemma}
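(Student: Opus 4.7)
The plan is to establish almost malnormality of $\mc{P}$ in $G$ using the action of $G$ on an appropriate Bass-Serre tree, and then deduce malnormality of $\mc{P}_0$ in $G_0$ from the fact that $G_0$, being free, is torsion-free.

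For the first claim, let $T$ be the Bass-Serre tree of the free-product decomposition $G = \langle x_1 \rangle \ast \langle x_2 \rangle \ast \langle x_3 \rangle$, viewed via the graph-of-groups with a central trivial-stabilizer vertex and three satellite vertices with stabilizers $\langle x_1 \rangle, \langle x_2 \rangle, \langle x_3 \rangle$. All edge stabilizers of $T$ are trivial; its type-$0$ vertices are identified with the elements of $G$, and its type-$\ell$ vertices (for $\ell \in \{1,2,3\}$) with the cosets $G/\langle x_\ell \rangle$. For each pair $i<j$, let $T_{ij} \subseteq T$ be the minimal $P_{ij}$-invariant subtree; this is naturally the Bass-Serre tree of $P_{ij} = \langle x_i \rangle \ast \langle x_j \rangle$, and a direct check using that $1 \in G$ is a type-$0$ vertex of $T_{ij}$ shows $\mathrm{Stab}_G(T_{ij}) = P_{ij}$. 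Now suppose $g \in P_{ij} \cap h P_{i'j'} h^{-1}$ has infinite order. Then $g$ acts hyperbolically on $T$ with a unique axis $L_g$, and because $T_{ij}$ and $h T_{i'j'}$ are $g$-invariant subtrees on which $g$ must still act hyperbolically, we have $L_g \subseteq T_{ij} \cap h T_{i'j'}$.

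It then suffices to show that $T_{ij} \cap h T_{i'j'}$ can contain a bi-infinite line only when $T_{ij} = h T_{i'j'}$, i.e., $\{i,j\} = \{i',j'\}$ and $h \in P_{ij}$. This is a direct coset computation: the type-$0$ vertices of $T_{ij}$ are identified with $P_{ij} \subset G$, those of $h T_{i'j'}$ with $h P_{i'j'} \subset G$, and analogous identifications hold for vertices of each type $\ell$. If $\{i,j\} = \{i',j'\}$ but $h \notin P_{ij}$, the cosets $P_{ij}$ and $h P_{ij}$ are disjoint and similarly no type-$\ell$ vertex is shared, so $T_{ij} \cap h T_{i'j'} = \emptyset$. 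If $\{i,j\} \neq \{i',j'\}$, say the pairs share exactly one index $\ell$, then a short computation shows the intersection is either empty or a star consisting of a single type-$\ell$ vertex joined to its $k$ neighbouring type-$0$ vertices; in either situation it is a finite tree and so contains no bi-infinite line.

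For $\mc{P}_0$ in $G_0$: the group $G_0$ is free (of rank $2k-2$), hence torsion-free. Given any nontrivial $w \in P_{ij,0} \cap g P_{i'j',0} g^{-1}$ with $g \in G_0$, the element $w$ has infinite order in $G$ and lies in $P_{ij} \cap g P_{i'j'} g^{-1}$; almost malnormality of $\mc{P}$ in $G$ therefore yields $\{i,j\} = \{i',j'\}$ and $g \in P_{ij} \cap G_0 = P_{ij,0}$. The main bookkeeping is the vertex-counting step in the second paragraph, which is where a careless argument could slip up by conflating cosets of different factors; the rest is standard Bass-Serre machinery for free products.
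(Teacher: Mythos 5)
Your argument is correct. The paper gives no proof of this lemma at all (it is explicitly ``left to the reader''), so there is nothing to compare against; your Bass--Serre argument is a standard and valid way to fill it in. The coset bookkeeping checks out: for two distinct pairs, which necessarily share exactly one index $\ell$, the translates of the minimal subtrees meet in at most the star of a single type-$\ell$ vertex (this reflects $P_{12}\cap P_{13}=\langle x_1\rangle$ and the fact that a nonempty intersection of cosets $P_{ij}\cap hP_{i'j'}$ is a single coset of $\langle x_\ell\rangle$), while for equal pairs with $h\notin P_{ij}$ the translates are disjoint; in neither case can they contain the axis of a hyperbolic element. One small step you should make explicit: almost malnormality only assumes that $P_{ij}\cap hP_{i'j'}h^{-1}$ is \emph{infinite}, whereas your argument begins with an element of infinite order in that intersection. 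You need the (one-line) bridging fact that an infinite subgroup of the virtually free group $G$ contains an infinite-order element --- for instance, its intersection with the finite-index free subgroup $G_0$ is infinite, hence a nontrivial free group. With that sentence added, the proof of almost malnormality of $\mc{P}$, and hence your deduction of malnormality of $\mc{P}_0$ inside the torsion-free group $G_0$, is complete.
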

  Both $G$ and $G_0$ are virtually free, and thus virtually special locally quasiconvex.  In particular, the pairs $(G,\mc{P})$ and $(G_0,\mc{P}_0)$ both satisfy the hypotheses of Theorem \ref{thm: MSQT} and Question \ref{q:smsqt}.

\subsection{Geometric conditions on graphs and triangle groups}
\begin{definition}
  Let $\Gamma$ be a graph.  The \emph{girth} of $\Gamma$ is the length of the shortest circuit in $\Gamma$.
\end{definition}
\begin{definition}\label{def:lambda1}
  If $\Gamma$ is a connected $k$--regular graph, we define the \emph{Laplacian} in terms of the adjacency matrix $A$:
\[ \delta = I - \frac{1}{k} A. \]
  With this normalization, the spectrum of $\delta$ always contains $0$ and lies in the interval $[0,2]$.
  We define $\lambda_1(\Gamma)$ to be the smallest positive eigenvalue of $\delta$.
\end{definition}

For each pair $i<j$, the group $P_{ij}$ acts on the regular $k$--valent tree $T_k$ with quotient equal to a single edge.  For definiteness we fix a planar embedding of this tree, and an oriented edge $e_0$.  Let $x_i$ act on this tree by rotating around $i(e_0)$ and let $x_j$ act by rotating around $t(e_0)$.  In this way we make $T_k$ into a Bass-Serre tree for the $P_{ij}$, considered as a free product.  Any finite index subgroup $N$ of $P_{ij}$ acts on $T_k$ with a finite quotient graph $\leftQ{T_k}{N}$.
\begin{definition}\label{def:rotundexpansive}
  Let $N\ \dotnorm\  P_{ij,0} < \ P_{ij}\cong (\bZ/k)\ast (\bZ/k)$.  We say that $N$ is \emph{rotund} if $\mathrm{girth}(\leftQ{T_k}{N})>6$.  We say that $N$ is \emph{expansive} if $\lambda_1(\leftQ{T_k}{N})>\frac{1}{2}$.
\end{definition}
These characterizations of subgroups as rotund or expansive depend on the particular action of $P_{ij,0}$ on $T_k$ given.
Here is a more precise version of Theorem \ref{thm:thenT}.
\begin{theorem}\label{thm:rotundandT}
  Let $G_0$, $P_{ij,0}$, $L,M,R$ be as in Definition \ref{ordinary}.  
  \begin{enumerate}
  \item If $L,M,R$ are rotund, then $G_0(L,M,R)$ is hyperbolic.
  \item If $L,M,R$ are rotund and expansive, then $G_0(L,M,R)$ has property (T).
  \end{enumerate}
\end{theorem}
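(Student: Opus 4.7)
The plan is to construct a geometric model for $G_0(L,M,R)$ on which both a curvature criterion and a spectral criterion can be applied. I expect Section~\ref{sec:complexes} to provide a simply connected $2$-complex $\cX = \cX(L,M,R)$ carrying a proper, cocompact action of $G_0(L,M,R)$, whose $2$-cells are triangles and whose vertex links are, up to permutation of vertex types, the three quotient graphs $\leftQ{T_k}{L}$, $\leftQ{T_k}{M}$, $\leftQ{T_k}{R}$. These are connected $k$-regular bipartite graphs; the link at a vertex of type $k$ should record the star of that vertex in $\cX$ in terms of the Bass--Serre tree of the peripheral subgroup $P_{ij}$ quotiented by the corresponding filling relation.

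Granting this model, my approach to (1) is to equip $\cX$ with a piecewise hyperbolic metric in which each $2$-cell is an equilateral hyperbolic triangle with vertex angle $\alpha$, where $2\pi/7 \le \alpha < \pi/3$ (any such $\alpha$ corresponds to a valid hyperbolic triangle). Edges of the links then inherit length $\alpha$, so Gromov's link condition reduces local CAT$(-1)$ at a vertex to the systole of its link being at least $2\pi$. Since $\alpha \cdot 7 \ge 2\pi$, rotundness --- which, because links are bipartite, means girth $\ge 8$ in any case but certainly $\ge 7$ --- is what is needed. Hence $\cX$ is locally, and by simple connectedness globally, CAT$(-1)$, and so $G_0(L,M,R)$ is hyperbolic by the \v{S}varc--Milnor lemma.

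For (2), I would apply the \.{Z}uk spectral criterion: a group acting properly, cocompactly, and simplicially on a simply connected $2$-dimensional simplicial complex all of whose vertex links are connected and satisfy $\lambda_1 > 1/2$ has Kazhdan's property (T). The links of $\cX$ are connected, as quotients of the connected tree $T_k$ by subgroups that act edge-transitively on the quotient graph, and expansiveness is precisely the hypothesis $\lambda_1 > 1/2$. So \.{Z}uk's criterion applies and yields property (T) for $G_0(L,M,R)$.

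The main obstacle is Step~1 --- the construction of $\cX$ and the verification that its vertex links are correctly identified with the quotient graphs $\leftQ{T_k}{\cdot}$ with the right edge-length assignment, so that both Gromov's link condition and \.{Z}uk's condition can be read off from the combinatorial data of Definition~\ref{def:rotundexpansive}. Once this geometric model is in hand, the remaining steps are essentially off-the-shelf; some care will be needed to ensure that the version of \.{Z}uk's criterion invoked accommodates the possibility that $\leftQ{T_k}{N}$ is a multigraph (for instance, when $N = P_{ij,0}$ itself the quotient has two vertices joined by $k$ parallel edges), perhaps by passing to a barycentric subdivision of $\cX$.
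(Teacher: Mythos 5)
Your overall strategy coincides with the paper's: metrize a triangle complex by equilateral hyperbolic triangles so that rotundness yields the Gromov link condition and hence a CAT$(-1)$ space, then apply the spectral criterion (the paper uses \cite[Corollary 1]{BS97}, the Ballmann--\'Swi\k{a}tkowski version of what you call \.Zuk's criterion) with expansiveness supplying $\lambda_1>\frac{1}{2}$. The genuine gap is the step you defer: you posit a simply connected complex $\cX$ with a proper cocompact action of $G_0(L,M,R)$ whose vertex links are $\leftQ{T_k}{L}$, $\leftQ{T_k}{M}$, $\leftQ{T_k}{R}$, and say you ``expect Section \ref{sec:complexes} to provide'' it. But the existence of such an $\cX$ is exactly the nontrivial point, and it cannot be taken as a given independent of the curvature hypothesis. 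The paper builds a complex of groups $\cD_0(L,M,R)$ (a triangle of groups whose vertex groups are the finite quotients $P_{12,0}/L$, $P_{13,0}/M$, $P_{23,0}/R$), identifies the local development at each vertex with the hyperbolic cone on the corresponding quotient graph, and then invokes the Bridson--Haefliger developability theorem \cite[Theorem III.$\mc{C}$.4.17]{BH}: a nonpositively curved complex of groups is developable, and if the local developments are CAT$(-1)$ then so is the development. Thus the girth bound is used \emph{first}, to certify the local developments and hence to produce the space $\cX$ (the development) at all, and only afterwards does one have a global CAT$(-1)$ space on which $G_0(L,M,R)$ acts properly cocompactly. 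Your write-up inverts this logic: it verifies curvature on a space whose existence has not been established, and developability of a complex of groups is not automatic (in general the putative vertex groups need not even inject into the quotient), so the model is not an off-the-shelf preliminary but precisely where rotundness must enter.

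Once that is repaired --- either by citing the complex-of-groups machinery or by an equivalent direct construction of the development --- the remainder of your argument matches the paper: the development is CAT$(-1)$, hence contractible; its links are the graphs $\leftQ{T_k}{N}$ for $N\in\{L,M,R\}$, which are connected; and $\lambda_1>\frac{1}{2}$ together with the proper cocompact action gives property (T). Your side remarks are harmless: bipartiteness does force even girth, and the multigraph concern does not arise for rotund $N$ (girth $>6$ excludes parallel edges), so no subdivision is needed.
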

Theorem \ref{thm:rotundandT} will be proved in Section \ref{sec:complexes}.  In Section \ref{sec:expanders} we will produce many examples of rotund expansive $L,M,R$, proving Proposition \ref{thm:existexpanders}.

\section{Triangular complexes of groups}\label{sec:complexes}
In this section, we give the geometric framework necessary to understand why the groups discussed in the last section answer Question \ref{q:smsqt}.  In particular, we will prove Theorem \ref{thm:rotundandT}.

The virtually free group $G$ can usefully be thought of as a complex of groups in two ways, both shown in Figure \ref{fig:triangle}. 
\begin{figure}[htbp]
  \centering
  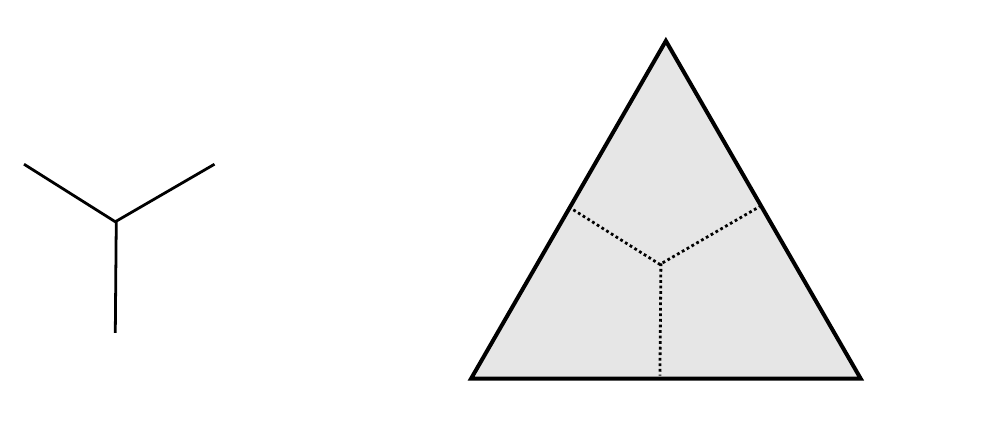
  \caption{$G$ as a graph of finite cyclic groups, and as a triangle of groups.}
  \label{fig:triangle}
\end{figure}
On the left, we see $G$ as the fundamental group of a graph (a tripod) of finite groups $\cG$.  On the right, we see $G$ as a triangle of groups $\cD$, with cyclic edge groups, and vertex groups equal to the peripheral groups.  Both complexes of groups are developable in the sense of \cite[III.$\mc{C}$]{BH}.  This means in the second case that there is an action of $G$ on a simply connected complex (the \emph{development}) with quotient $\cD$, and the complex-of-groups data can be recovered from the action.  Likewise $\cG$ is the quotient of a Bass--Serre tree $T$ by the natural action of $G$.  Here is a way to recover the development in this case: Each $P_{ij}\in \mc{P}$ has a minimal invariant subtree $T_{P_{ij}}$ in $\widetilde{\cG}$. The development $\widetilde{\cD}$ of $\cD$, is homeomorphic to a complex which is obtained from $T$ by coning off each translate of any $T_{P_{ij}}$.  
The link of a vertex in $\widetilde{\cD}$ can be identified with the Bass-Serre tree of $\bZ/k\ast \bZ/k$ (see Lemma \ref{lem2} below).

Likewise, the free group $G_0$ is the fundamental group of a graph $\cG_0$ and a complex of groups $\cD_0$, both shown in Figure \ref{fig:complex}.  Here the vertex groups of $\cD_0$ are the elements of $\mc{P}_0$.  The development of the complex of groups is also $\widetilde{\cD}$.  The link of a vertex of $\cD_0$ is a graph with two vertices joined by $k$ edges.

\begin{figure}[htbp]
  \centering
  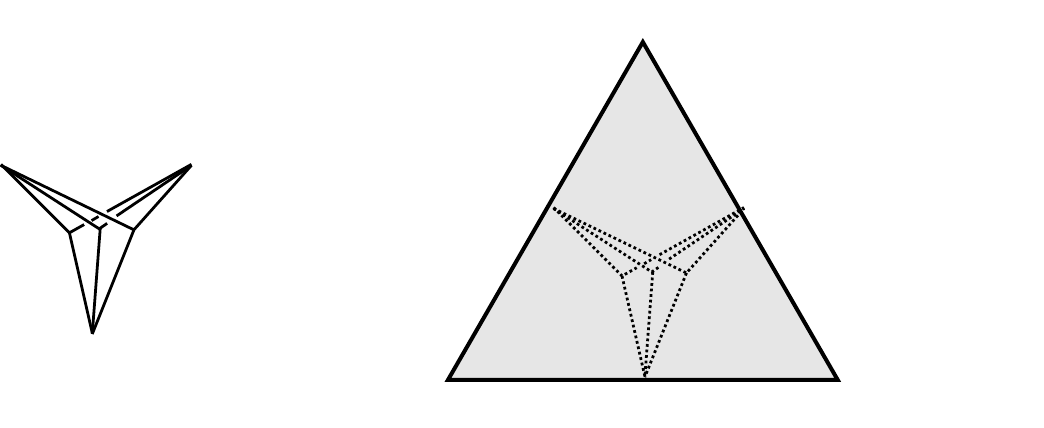
  \caption{$G_0$ is the fundamental group of the graph on the left, and also the fundamental group of the complex of groups with underlying complex $Y$ on the right.  The case $k=3$ is shown.}
  \label{fig:complex}
\end{figure}

Now fix a $k$--fold triangle group $G_0(L,M,R)$ as in Definition \ref{ordinary}.  We obtain a complex of groups structure $\cD_0(L,M,R)$ for $G_0(L,M,R)$ in terms of the one for $G_0$, by replacing the vertex groups (elements of $\mc{P}_0$) with their finite quotients $P_{12,0}/L$, $P_{13,0}/M$, and $P_{23,0}/R$.  

Bridson and Haefliger give a criterion which implies that a given complex of groups is developable.
\begin{theorem}\cite[Theorem III.$\mc{C}$.4.17]{BH}\label{thm:developability}
  If a complex of groups is non-positively curved it is developable.  Moreover, if the local developments are CAT$(-1)$ then the development is CAT$(-1)$.
\end{theorem}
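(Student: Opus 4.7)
The plan is to follow the standard two-stage approach: first realize the development as a universal-cover-like object, then apply a Cartan--Hadamard argument to upgrade local curvature to global curvature. As a first step I would construct the candidate development $\widetilde{\cD}$ as a set of equivalence classes of formal ``lifted chains'' (morphisms in the associated scwol paired with local group elements), modulo the equivalence relation generated by the local-group relations and the twisting cocycle data of the complex of groups. One then equips $\widetilde{\cD}$ with the piecewise-Euclidean (or piecewise-hyperbolic, in the CAT$(-1)$ case) metric obtained by lifting the cell metrics, and verifies that the obvious action of the ``fundamental group of the complex of groups'' has quotient recovering the original complex-of-groups datum.

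For developability proper I would need to prove that the local groups inject into the fundamental group; equivalently, that the link of each lift of a vertex in $\widetilde{\cD}$ is isomorphic to the link prescribed by the local development at that vertex. This is exactly where the nonpositive curvature hypothesis enters: by assumption each local development is CAT$(0)$, hence in particular simply connected, so any relation among local group elements that could cause collapse would unfold there, forcing injectivity. Once injectivity of local groups is established, the usual bookkeeping shows that the $\cD$--equivalent pieces glue consistently and that the action is by cellular isometries with the correct stabilizers.

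For the ``moreover'' clause I would invoke the Cartan--Hadamard theorem for CAT$(\kappa)$ spaces with $\kappa\leq 0$: a complete geodesic metric space is globally CAT$(\kappa)$ if and only if it is simply connected and locally CAT$(\kappa)$. Completeness follows from the fact that $\widetilde{\cD}$ is a locally finite (more precisely, uniformly locally finite over cells) piecewise-hyperbolic complex with finitely many isometry types of cells. Local CAT$(-1)$ reduces, at each point, to the CAT$(-1)$ property of the appropriate local development, which is the hypothesis. Simple connectivity of $\widetilde{\cD}$ follows from its construction: the equivalence relation defining $\widetilde{\cD}$ is precisely strong enough to make homotopy classes of loops in the quotient correspond to elements of the fundamental group, so any loop in $\widetilde{\cD}$ projects to a nullhomotopic loop downstairs and hence lifts to a path whose endpoints agree.

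The main obstacle is set-up: choosing the correct category (scwols with twisted group data and their morphisms), phrasing the right equivalence relation on formal chains, and verifying that cells glue isometrically across the equivalence. A closely related subtlety is giving a clean proof that the constructed $\widetilde{\cD}$ is simply connected without circularly assuming injectivity of the local groups; one must either use a direct combinatorial argument (reducing any loop by local-group moves) or a van Kampen-type theorem adapted to complexes of groups. Once this bookkeeping is in place, the curvature statements are essentially immediate from Cartan--Hadamard and the link-based characterization of local CAT$(\kappa)$.
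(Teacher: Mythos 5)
You should first note that the paper offers no proof of this statement at all: it is quoted verbatim from Bridson--Haefliger (Theorem III.$\mc{C}$.4.17), so the relevant comparison is with BH's own argument, which runs through the machinery of groupoids of local isometries (their Chapter III.$\mc{G}$) and an equivariant, local-to-global Cartan--Hadamard-type theorem, not through a direct chain-complex construction with an ad hoc injectivity argument.

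The genuine gap in your sketch is the step where developability is supposed to follow from nonpositive curvature. You argue that since each local development is CAT$(0)$ it is in particular simply connected, and that ``any relation among local group elements that could cause collapse would unfold there, forcing injectivity.'' But the local development at a vertex is a cone on the (upper) link, hence contractible --- and in particular simply connected --- for \emph{every} complex of groups, nonpositively curved or not. So the only property of the hypothesis your argument actually invokes holds unconditionally, and the argument would ``prove'' that every complex of groups is developable; this is false, as BH exhibit non-developable triangles of groups. The curvature hypothesis enters metrically, not topologically: one needs a Cartan--Hadamard-type local-to-global theorem applied not to a space one already has but to the complex-of-groups datum itself (in BH, to the associated groupoid of local isometries of a nonpositively curved space), and injectivity of the local groups is a \emph{consequence} of that global development existing, not something extractable from simple connectivity of the local pieces. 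Relatedly, your claim that the formal-chain construction of $\widetilde{\cD}$ is ``precisely strong enough'' to be simply connected is circular in the same way: simple connectivity of the development is part of what must be established (via the universal cover of the complex of groups), and it is exactly where a collapse of local groups would manifest. The ``moreover'' clause is the unproblematic part: once one has a developable complex with simply connected development, finitely many isometry types of CAT$(-1)$ cells give completeness and local CAT$(-1)$, and the standard Cartan--Hadamard theorem for metric spaces finishes it.
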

To say that a complex of groups is non-positively curved is precisely to say that the \emph{local developments} are non-positively curved.  This condition depends on how we metrize the cells of the complex.  In our case, we can metrize the triangles as hyperbolic triangles with some constant angle $\theta$.  The local development at a vertex marked by a group $P_{ij,0}/N$ where $N\lhd P_{ij,0}\lhd \bZ/k\ast \bZ/k$ is the hyperbolic cone on the graph $\leftQ{T_k}{N}$, where this graph has been metrized so each edge has length $\theta$.  If $\theta \cdot \mathrm{girth}(\leftQ{T_k}{N})\geq 2\pi$, this local development is locally CAT$(-1)$.  In particular it will satisfy the nonpositive curvature hypothesis in Theorem \ref{thm:developability}.

\begin{proposition}\label{prop:rotund}
  Suppose that $L$, $M$, and $R$ are rotund.  
  \begin{enumerate}
  \item\label{develop} $\cD_0(L,M,R)$ is developable, and the development is contractible;
  \item\label{hyperbolic} $G_0(L,M,R)$ is hyperbolic; and
  \item\label{link} The link of any vertex of the development of $\cD_0(L,M,R)$ is isomorphic to $\leftQ{T_k}{N}$ where $N\in \{L,M,R\}$.
  \end{enumerate}
\end{proposition}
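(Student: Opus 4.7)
The plan is to apply Bridson and Haefliger's criterion (Theorem \ref{thm:developability}) by equipping $\cD_0(L,M,R)$ with an explicit hyperbolic metric, then read off all three conclusions from the resulting CAT$(-1)$ structure. I would metrize each triangular face as a hyperbolic triangle with all three interior angles equal to a common value $\theta$, to be specified. To apply Theorem \ref{thm:developability}, I must show the local development at every point is CAT$(-1)$; the only nontrivial points are the vertices, where (as described in the text preceding the statement) the local development is the hyperbolic cone on $\leftQ{T_k}{N}$ with edges metrized to have length $\theta$, for $N\in\{L,M,R\}$. The stated sufficient condition for this cone to be locally CAT$(-1)$ is $\theta\cdot\mathrm{girth}(\leftQ{T_k}{N})\geq 2\pi$.

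Next, I would choose $\theta$ to satisfy this condition simultaneously for all three links. Rotundness gives $\mathrm{girth}(\leftQ{T_k}{N})\geq 7$ for each $N\in\{L,M,R\}$, so it suffices to take $\theta\geq 2\pi/7$. On the other hand, a hyperbolic triangle with three angles equal to $\theta$ exists exactly when $3\theta<\pi$, i.e.\ $\theta<\pi/3$. Since $2\pi/7<\pi/3$, any $\theta\in[2\pi/7,\pi/3)$ works, and with such a choice every local development is CAT$(-1)$. Theorem \ref{thm:developability} then yields (\ref{develop}) in the strong form that the development $\widetilde{\cD_0(L,M,R)}$ is a CAT$(-1)$ space; in particular it is uniquely geodesic and therefore contractible.

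For (\ref{hyperbolic}), I would observe that $G_0(L,M,R)$ acts on the CAT$(-1)$ development with quotient the finite complex $\cD_0(L,M,R)$, hence cocompactly, and with vertex stabilizers the finite groups $P_{12,0}/L$, $P_{13,0}/M$, $P_{23,0}/R$ (edge and face stabilizers are trivial), hence properly. Any group acting properly and cocompactly by isometries on a CAT$(-1)$ space is Gromov hyperbolic, giving (\ref{hyperbolic}). Finally, (\ref{link}) is essentially a definition chase: the link of a vertex in the development is, by construction of the local development of a complex of groups, the same as the link of the corresponding vertex in the local development there, and this link is by choice the graph $\leftQ{T_k}{N}$ (with $N\in\{L,M,R\}$ according to which of the three vertex types we are at).

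I do not expect a serious obstacle; the content is entirely in verifying the hypotheses of Theorem \ref{thm:developability}. The one delicate point is the numerical range for $\theta$: the argument only works because the rotundness bound $\mathrm{girth}>6$ is strict, opening the interval $[2\pi/7,\pi/3)$. If one only had $\mathrm{girth}\geq 6$, the best one could say is $\theta=\pi/3$, which degenerates to a Euclidean (hence only CAT$(0)$) triangle, losing both strict negative curvature and, with it, the hyperbolicity conclusion in (\ref{hyperbolic}). Everything else is standard CAT$(-1)$ bookkeeping.
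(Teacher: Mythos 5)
Your proposal is correct and follows essentially the same route as the paper: metrize the triangles as equilateral hyperbolic triangles with angle $\theta$ chosen via the girth bound (the paper takes $\theta=2\pi/n$ with $n\geq 7$ the minimal girth, which lies in your interval $[2\pi/7,\pi/3)$ up to the same considerations), apply Theorem \ref{thm:developability} to get a CAT$(-1)$, hence contractible, development, and deduce hyperbolicity from the proper cocompact action; the link identification is, as you say, immediate from the local development.
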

\begin{proof}
  The local development of $\cD_0(L,M,R)$ at a vertex is as described in item \eqref{link}.  Thus if $\cD_0(L,M,R)$ is in fact developable, item \eqref{link} will follow.

  Let $n\geq 7$ be the minimum girth of the graphs $\leftQ{T_k}{N}$ where $N\in \{L,M,R\}$.  Metrizing the triangles of $\cD_0(L,M,R)$ by equilateral hyperbolic triangles with angle $2\pi/n$, we can verify the conditions of Theorem \ref{thm:developability} as discussed above, and see that $\cD_0(L,M,R)$ is developable, establishing the first part of item \eqref{develop}.  The development $X$ is moreover locally (and hence globally) CAT$(-1)$ and thus contractible.  Moreover the group $G_0(L,M,R)$ acts properly cocompactly on $X$.  Thus $G_0(L,M,R)$ is hyperbolic, establishing item \eqref{hyperbolic}.  
\end{proof}

To deduce property (T) when the normal subgroups are expansive, we need the following criterion.
\begin{theorem}\cite[Corollary 1]{BS97}\label{Tcriterion}
  Let $\Gamma\acts Z$ properly and cocompactly, where $Z$ is a contractible simplicial $2$--complex so that for every vertex $v$ of $Z$, the link $Z_v$ of $v$ is connected and satisfies $\lambda_1(Z_v)>\frac{1}{2}$.  Then $\Gamma$ has Property (T).
\end{theorem}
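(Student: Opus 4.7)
My plan is to prove this criterion via Garland's local-to-global technique, which is the spectral method underlying \.Zuk's criterion and its refinement by Ballmann--\'Swi\k{a}tkowski. By the Delorme--Guichardet theorem, it suffices to show that $\bar H^1(\Gamma,\pi) = 0$ for every unitary representation $\pi$ of $\Gamma$ on a Hilbert space $\mc{H}$. Because $\Gamma$ acts properly and cocompactly on the contractible $2$--complex $Z$, this reduced cohomology is computed by the complex of square-integrable equivariant cochains on $Z$ with coefficients in $\mc{H}$: a $1$--cochain $c$ assigns (antisymmetrically) a vector in $\mc{H}$ to each oriented edge of $Z$, and the coboundary sums around $2$--cells. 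Introducing the combinatorial Laplacian $\Delta_1 = d^*d + dd^*$ on $1$--cochains, the vanishing of $\bar H^1(\Gamma,\pi)$ is equivalent to $\Delta_1$ being bounded below by a positive constant on the orthogonal complement of the coboundaries.

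The core calculation is Garland's identity, which rewrites the quadratic form $\langle \Delta_1 c, c\rangle$ as a sum, over vertices $v$ of $Z$, of local quadratic forms depending only on the restriction of $c$ to the edges of $Z$ incident to $v$. Identifying those edges with the vertex set of $Z_v$, the local form at $v$ equals that of the link Laplacian of $Z_v$ applied to the resulting $\mc{H}$--valued function on $Z_v^{(0)}$, minus a diagonal term coming from the fact that each edge of $Z$ is accounted for at both of its endpoints. The resulting estimate has the schematic form
\[
\langle \Delta_1 c, c\rangle \;\geq\; \sum_v \bigl(\lambda_1(Z_v) - \tfrac{1}{2}\bigr)\, \|c|_{\mathrm{star}(v)}\|^2 .
\]
The hypothesis $\lambda_1(Z_v) > \tfrac{1}{2}$, combined with cocompactness (so that one need only take a minimum over finitely many $\Gamma$--orbits of vertex links) gives a uniform positive lower bound on $\Delta_1$ modulo coboundaries; hence $\bar H^1(\Gamma,\pi) = 0$ for every $\pi$, and $\Gamma$ has Property (T).

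The main obstacle is making Garland's decomposition precise in the vector-valued setting with the correct normalization. The factor $\tfrac{1}{2}$ in the hypothesis is exactly what is forced by this calculation (each edge contributes to the star of two vertices), and the connectedness hypothesis on $Z_v$ is consumed in ensuring that $\lambda_1(Z_v)$ genuinely measures the spectral gap above the zero eigenvalue rather than itself being zero. Once the local-to-global inequality is established, the remainder---reading off the spectral gap on the orthogonal complement of the coboundaries and invoking Delorme--Guichardet---is standard.
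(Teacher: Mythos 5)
This statement is not proved in the paper at all: it is quoted verbatim from Ballmann--\'Swi\k{a}tkowski \cite{BS97}, so there is no internal argument to compare yours against. Your outline is the standard (and essentially the only) proof of that result, namely Garland's local-to-global spectral method, which is exactly what \cite{BS97} carries out; so the approach is the right one. As written, however, it is a plan rather than a proof: the central Garland identity is only asserted ``schematically,'' and two points you pass over need genuine care. First, the action of $\Gamma$ on $Z$ is proper and cocompact but need not be free --- in the paper's application the vertex stabilizers are the finite groups $P_{ij,0}/N$ --- so identifying $\bar H^1(\Gamma,\pi)$ with the cohomology of the $\Gamma$--equivariant square-integrable cochain complex on $Z$ requires averaging over the finite stabilizers (characteristic zero/Hilbert space coefficients make this work, but it must be said). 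Second, Delorme--Guichardet characterizes Property (T) via vanishing of \emph{unreduced} $H^1(\Gamma,\pi)$; the clean way to finish from your estimate is to observe that the uniform spectral gap forces the image of the coboundary map to be closed, so reduced and unreduced first cohomology coincide and both vanish. Filling in these points, together with the precise vector-valued Garland inequality in which the threshold $\tfrac12$ appears, amounts to reproducing the computation in \cite{BS97}; for the purposes of this paper the theorem is simply a citation, and your sketch is a faithful summary of why it is true rather than an independent route to it.
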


\begin{proposition}\label{thm:expansive}
  If $L$, $M$, and $R$ are rotund and expansive, then $G_0(L,M,R)$ has property (T).
\end{proposition}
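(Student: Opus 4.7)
The proof will be a direct application of Theorem \ref{Tcriterion} to the action of $G_0(L,M,R)$ on the development $X$ of $\cD_0(L,M,R)$. All the hard work has been done by Proposition \ref{prop:rotund}, which under the rotundness hypothesis already furnishes a contractible $2$--complex $X$ admitting a proper cocompact action of $G_0(L,M,R)$ (the action is proper because every vertex stabilizer is a finite quotient $P_{ij,0}/N$, and cocompact since the complex of groups has finitely many cells), and it further identifies the link of each vertex of $X$ with one of the graphs $\leftQ{T_k}{L}$, $\leftQ{T_k}{M}$, or $\leftQ{T_k}{R}$.

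The plan is then to verify the three hypotheses of Theorem \ref{Tcriterion} in turn. First, I would check that $X$ is a simplicial $2$--complex: the rotundness condition $\mathrm{girth}(\leftQ{T_k}{N})>6$ is vastly more than what is needed to rule out degenerate identifications (bigons, or pairs of $2$--cells sharing the same set of vertices) in a neighborhood of each vertex, so $X$ inherits a simplicial structure from the triangular structure of $\cD_0(L,M,R)$. Second, each link $\leftQ{T_k}{N}$ is connected, since it is the quotient of the connected tree $T_k$ by the action of $N$. Third, and this is the key input, the expansiveness hypothesis says precisely that the normalized Laplacian $\delta = I - \tfrac{1}{k}A$ on each link has $\lambda_1(\leftQ{T_k}{N}) > \tfrac{1}{2}$. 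Since every link is $k$--regular, this normalization coincides with the one used by Ballmann--\'Swi\c{a}tkowski in their criterion.

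With all three hypotheses satisfied, Theorem \ref{Tcriterion} delivers property (T) for $G_0(L,M,R)$, completing the proof.

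The only place where there might be any friction is matching conventions: one must confirm that the normalization of the Laplacian in Definition \ref{def:lambda1} agrees with the one in \cite{BS97}, and that $X$ really is a simplicial complex rather than merely a polygonal $2$--complex. Both points are essentially bookkeeping given how strong the rotundness and expansiveness hypotheses are, so I do not anticipate any serious obstacle; the proof should be short and consist almost entirely of verifying the hypotheses of Theorem \ref{Tcriterion}.
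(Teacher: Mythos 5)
Your proof is correct and follows exactly the paper's argument: invoke Proposition \ref{prop:rotund} for the proper cocompact action on a contractible development with links $\leftQ{T_k}{N}$, then apply the Ballmann--\'Swi\k{a}tkowski criterion (Theorem \ref{Tcriterion}) using expansiveness to get $\lambda_1>\tfrac{1}{2}$ on each link. The extra remarks about simpliciality and Laplacian normalization are reasonable bookkeeping but not a departure from the paper's proof.
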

\begin{proof}
  Since $L$, $M$, and $R$ are rotund, the group $G_0(L,M,R)$ acts properly and cocompactly on the development $\cD_0(L,M,R)$, which is a contractible complex with each link isomorphic to $\leftQ{T_k}{N}$ for $N\in \{L,M,R\}$.  Since $L,M,R$ are expansive, we have $\lambda_1(\mathrm{link}(v))>\frac{1}{2}$ for each vertex $v$.  We can thus apply Theorem \ref{Tcriterion} to conclude that $G_0(L,M,R)$ has property (T).
\end{proof}

Propositions \ref{prop:rotund} and \ref{thm:expansive} together imply Theorem \ref{thm:rotundandT}.

\section{Finding good expanders}\label{sec:expanders}
In this section we prove Proposition \ref{thm:existexpanders}, about the existence of the expanders we need.  The proposition is phrased in terms of a subgroup $N$, normal and finite index in $P_0 \lhd P \cong \bZ/k\ast \bZ/k$, where $P_0$ is defined to be the kernel of the map $\bZ/k\ast \bZ/k\to \bZ/k$ taking each generator to $\bar{1}\in \bZ/k$.  In applying the results of this section we identify $P_0$ with one of the $P_{ij,0}$ described before.
For $T_k$ equal to the Bass-Serre tree associated to the free splitting of $P$, we are interested in the girth and first eigenvalue of the graphs $\leftQ{T_k}{N}$.  We proceed by identifying $P_0$ with a certain arithmetic subgroup of $\pgl_2(\bQ_p)$.  (We should emphasize that none of the results of the next two subsections are really new, but we want to include enough of the ideas from \cite{LPS88,Lubotzky94} so that the reader gets the flavor of what is going on.)

\subsection{The setup}
If $T$ is a tree, we let $\Aut^+(T)<\Aut(T)$ be the subgroup of index at most two consisting of those $\phi$ which move a point (hence every point) an even distance.  Note that $\Aut^+(T)$ acts on $T$ without inversions, and $\leftQ{T}{\Gamma}$ is bipartite for any $\Gamma<\Aut^+(T)$.  The first lemma is an easy corollary of the fact that $T$ is contractible.
\begin{lemma}\label{lem1}
  Let $T$ be a locally finite tree and $D,\Delta$ two discrete torsion-free subgroups of $\Aut^+(T)$ so that the graphs $\leftQ{T}{D}$ and $\leftQ{T}{\Delta}$ are isomorphic.  Then $D$ and $\Delta$ are conjugate in $\Aut(T)$.
\end{lemma}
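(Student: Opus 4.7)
The plan is to recognize the quotient maps $T\to \leftQ{T}{D}$ and $T\to \leftQ{T}{\Delta}$ as universal covers of graphs, and then use uniqueness of the universal cover to lift the given graph isomorphism to an automorphism of $T$ that does the required conjugating.

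The first step is to check that any discrete torsion-free subgroup $H<\Aut^+(T)$ acts freely on the cells of $T$. Because $H\subseteq \Aut^+(T)$, no element of $H$ inverts an edge, so a nontrivial element of $H$ with a fixed cell must fix some vertex $v$. Any element of the vertex stabilizer $H_v$ acts on the finite link of $v$, and a standard ball-by-ball argument shows that every element of $H_v$ has finite order in $\Aut(T)$; since $H$ is torsion-free this forces $H_v=1$. (Equivalently, by discreteness the stabilizer $H_v$ is finite, and torsion-freeness makes it trivial.) Thus $H$ acts freely on $T$, and because $H$ is discrete the quotient $\leftQ{T}{H}$ is a genuine graph with $T\to\leftQ{T}{H}$ a covering map. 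Since $T$ is contractible, this is the universal cover.

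The second step is the lifting. Fix an isomorphism $\phi\co \leftQ{T}{D}\to\leftQ{T}{\Delta}$ and basepoints $\tilde v_0\in T$, $v_0\in \leftQ{T}{D}$, $\phi(v_0)\in \leftQ{T}{\Delta}$ compatible with the projections. Applying the uniqueness of universal covers (in the category of graphs, or equivalently of simplicial $1$--complexes) to the composition $T\to\leftQ{T}{D}\xrightarrow{\phi}\leftQ{T}{\Delta}$, we obtain an isomorphism $\tilde\phi\co T\to T$ of universal covers making the square commute. For $d\in D$, the element $\tilde\phi\, d\, \tilde\phi^{-1}$ is a deck transformation of $T\to\leftQ{T}{\Delta}$ because it descends to the identity on the base, hence lies in $\Delta$; running the same argument for $\tilde\phi^{-1}$ shows $\tilde\phi D\tilde\phi^{-1}=\Delta$, as required.

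I expect the only nontrivial step to be verifying freeness of the action, which is what justifies treating the quotient maps as honest covering projections. Everything after that is the standard fact that the universal cover is determined up to equivalence by the base, and that such an equivalence conjugates deck groups.
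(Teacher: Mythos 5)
Your argument is correct and is essentially the proof the paper intends: the paper disposes of this lemma with the remark that it is ``an easy corollary of the fact that $T$ is contractible,'' i.e.\ precisely the covering-space argument you spell out (free action of a discrete torsion-free subgroup of $\Aut^+(T)$, quotient graphs covered by the universal cover $T$, lift of the isomorphism conjugating the deck groups). One small caution: fixing a vertex does not by itself give finite order in $\Aut(T)$ (vertex stabilizers in $\Aut(T)$ of a locally finite tree contain infinite-order elements), so the justification for freeness should be your parenthetical one---by discreteness $H_v$ is a discrete subgroup of the compact group $\Aut(T)_v$, hence finite, hence trivial by torsion-freeness---or the ball-by-ball argument run together with discreteness, rather than the ball argument alone.
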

The second lemma is also standard.
\begin{lemma}\label{lem2}
  Let $k\in \bZ^+$, and let $P = \bZ/k\ast \bZ/k$, and let $T$ be the Bass-Serre tree for the free splitting of $P$.  (In other words vertices are in one-to-one correspondence with left cosets of the free factors, and if $A_1$ is the first and $A_2$ the second free factor, each $gA_1$ is connected to $gA_2$ by an edge.)
Let $P_0$ be the kernel of the map $P\to \bZ/k$ which is the identity on each free factor.
  \begin{enumerate}
  \item $T$ is a $k$--regular tree.
  \item $\leftQ{T}{P_0}$ consists of $2$ vertices connected by $k$ edges.
  \end{enumerate}
\end{lemma}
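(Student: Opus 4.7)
The plan is to argue by standard Bass--Serre bookkeeping. For part (1), I would identify the edges incident to a vertex $gA_1$ with those elements $g' \in P$ (edges, i.e.\ cosets of the trivial subgroup) lying in $gA_1$; these are precisely the $k$ elements $gh$ with $h\in A_1$, each joining $gA_1$ to $ghA_2$. These neighbors are distinct, because $ghA_2 = gh'A_2$ would force $h^{-1}h' \in A_1\cap A_2 = \{1\}$. Hence every $A_1$-type vertex has valence $|A_1| = k$, and symmetrically for $A_2$-type vertices, establishing $k$-regularity.

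For part (2), I would first record two observations. Since the homomorphism $\phi\colon P \to \bZ/k$ defining $P_0$ restricts to an isomorphism on each free factor, each $A_i$ is a complement of $P_0$ in $P$: we have $P = P_0 A_1 = P_0 A_2$ and $P_0 \cap A_1 = P_0 \cap A_2 = \{1\}$. Second, because $\bZ/k$ is abelian, $\phi(gxg^{-1}) = \phi(x)$ for all $g,x\in P$, so any vertex stabilizer $gA_ig^{-1}$ intersects $P_0$ trivially. Together these imply that $P_0$ acts freely (and, since $P_0 \leq \ker\phi$ maps each $A_i$-coset to an $A_i$-coset, without inversions) on $T$, so $\leftQ{T}{P_0}$ is a bona fide graph.

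The quotient structure then falls out of double-coset counts. The $P_0$-orbits of $A_i$-type vertices are parameterized by $P_0\backslash P / A_i$, which is a single double coset by the first observation; so the quotient has exactly one vertex of each type. The edges of $T$ form a single free $P$-orbit, so their $P_0$-orbits are parameterized by $P_0\backslash P \cong \bZ/k$, yielding $k$ edges. Each edge in $T$ joins an $A_1$-vertex to an $A_2$-vertex, and this incidence survives to the quotient, so $\leftQ{T}{P_0}$ consists of $2$ vertices joined by $k$ edges, as claimed. The only real difficulty is keeping the coset and double-coset accounting straight; the substance of the lemma reduces to the elementary observation that each $A_i$ is a set-theoretic complement of $P_0$ in $P$.
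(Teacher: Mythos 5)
Your proof is correct, and it is exactly the standard Bass--Serre bookkeeping argument: the paper itself offers no proof, dismissing the lemma as ``standard,'' and your coset/double-coset counts (edges $\leftrightarrow$ elements of $P$, vertex orbits $\leftrightarrow$ $P_0\backslash P/A_i$ with $P=P_0A_i$ and $P_0\cap A_i=\{1\}$, edge orbits $\leftrightarrow$ $P_0\backslash P\cong\bZ/k$) are precisely the argument intended. No gaps.
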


 Let $p$ be a prime and  $T_{p+1}$ be the regular $(p+1)$--valent tree, and let $\bQ_p$ be the field of $p$--adic numbers.
 Then $\pgl_2(\bQ_p)$ acts on the Bruhat--Tits tree $T_{p+1}$, as explained in \cite{Serre}.

  Our goal now is to find an \emph{arithmetic} subgroup $\Delta$ of $\psl_2(\bQ_p)<\pgl_2(\bQ_p)<\Aut(T_{p+1})$, so that the quotient $\leftQ{T_{p+1}}{\Delta} \cong \leftQ{T}{P_0}$, the graph from Lemma \ref{lem2}, with $k=p+1$.  Lemma \ref{lem1} implies that $\Delta$ is conjugate to $P_0$ in $\Aut(T_{p+1})$.  

  We will take suitable \emph{congruence} subgroups $\Delta(i)$ of $\Delta$; the isomorphism taking $\Delta$ to $P_0$ will take these congruence subgroups to the groups $L,M,R$ specified in Theorem \ref{thm:rotundandT}.  The fact that the graphs $\leftQ{T}{\Delta(i)}$ are Ramanujan will come from \cite[Section 7.3, Theorem 7.3.12]{Lubotzky94}, using the solution of Deligne to the Ramanujan--Peterson conjecture.  We will see that in fact, one can choose the subgroups $\Delta(i)$ to be nested with $\bigcap_i\Delta(i) = \{1\}$, so that the girth of $\leftQ{T}{\Delta(i)}$ goes to infinity.  

\subsection{Constructing $\Delta$}
Fix $p$ prime, with $p\equiv 1 \pmod{4}$ and $p \geq 17$.

Recall the classical four square theorem of Jacobi \cite[Theorem 386]{HardyWright}: 
\begin{theorem}(Jacobi)
Let $n\in \bZ^+$, and define \[r_4(n) = \#\left\{(x_0,x_1,x_2,x_3)\in \bZ^4\left\mid \sum x_j^2 = n\right.\right\}.\]  Then
\[ r_4(n) = 8\sum_{d\divides n,4\ndivides d} d .\]
\end{theorem}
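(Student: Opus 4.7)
The plan is the classical modular-forms proof. Introduce the Jacobi theta series
\[
\theta(\tau) \;=\; \sum_{n\in\bZ} q^{n^2}, \qquad q = e^{2\pi i\tau},
\]
so that $\theta(\tau)^4 = \sum_{n\geq 0} r_4(n)\,q^n$ is the generating series of interest. The strategy is to identify $\theta^4$ explicitly as a specific element of a low-dimensional space of modular forms, and then simply read off the Fourier coefficients.

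First, I would establish (using the Poisson summation formula applied to Gaussians, together with a direct computation at the cusps) that $\theta^4$ is a holomorphic modular form of weight $2$ on the congruence subgroup $\Gamma_0(4)$. Next I would invoke the standard dimension count $\dim M_2(\Gamma_0(4)) = 2$, with explicit basis provided by the two Eisenstein combinations
\[
G_1(\tau) = E_2(\tau) - 2E_2(2\tau), \qquad G_2(\tau) = E_2(\tau) - 4E_2(4\tau),
\]
where $E_2(\tau) = 1 - 24\sum_{n\geq 1}\sigma_1(n)q^n$ is the quasi-modular weight-$2$ Eisenstein series; the quasi-modular anomaly cancels in each difference $E_2(\tau) - NE_2(N\tau)$, which is genuinely modular on $\Gamma_0(N)$.

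The key combinatorial step is the identity
\[
\sigma_1(n) - 4\sigma_1(n/4) \;=\; \sum_{d\mid n,\ 4\nmid d} d
\]
(with the convention $\sigma_1(n/4)=0$ when $4\nmid n$), which follows by partitioning the divisors of $n$ into those divisible by $4$ and those not. In particular the $q^n$ coefficient of $G_2$ for $n\geq 1$ equals $-24\sum_{d\mid n,\, 4\nmid d} d$. Writing $\theta^4 = \alpha G_1 + \beta G_2$ and matching the first two Fourier coefficients using $r_4(0) = 1$ and $r_4(1) = 8$, one obtains a $2\times 2$ linear system with unique solution $\alpha = 0$, $\beta = -\tfrac{1}{3}$. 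Comparing $q^n$ coefficients for $n\geq 1$ then gives
\[
r_4(n) \;=\; -\tfrac{1}{3}\cdot(-24)\sum_{d\mid n,\ 4\nmid d} d \;=\; 8\sum_{d\mid n,\ 4\nmid d} d,
\]
which is the desired formula.

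The main obstacle is the first step: proving the modularity of $\theta^4$ on $\Gamma_0(4)$. This is delicate because $\theta$ itself has half-integral weight and a nontrivial multiplier system, requiring either Jacobi's classical transformation formula or an explicit Poisson summation argument. Once that analytic input is in hand, the remainder of the argument reduces to linear algebra in a two-dimensional space. A purely arithmetic alternative would bypass modular forms and count $r_4(n)$ via the Lipschitz quaternions $\bZ\langle i,j,k\rangle$, relating representations to ideal counts in the Lipschitz order and then to divisor sums via its local factorization structure -- an approach more in keeping with the quaternionic flavor of the paper's later use of the Lubotzky--Phillips--Sarnak construction.
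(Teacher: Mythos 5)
Your outline is a correct route to Jacobi's formula, but note that the paper does not prove this statement at all: it is quoted as a classical theorem with a citation to Hardy--Wright (Theorem 386), where the proof is carried out via theta-function/elliptic identities --- essentially the same generating-function circle of ideas as yours, but developed without the modular-forms vocabulary. Your sketch itself checks out: $\theta^4$ lies in $M_2(\Gamma_0(4))$, that space is $2$-dimensional and spanned by $E_2(\tau)-2E_2(2\tau)$ and $E_2(\tau)-4E_2(4\tau)$, the divisor identity $\sigma_1(n)-4\sigma_1(n/4)=\sum_{d\divides n,\,4\ndivides d}d$ is elementary, and the linear system from the coefficients of $q^0$ and $q^1$ (with $r_4(0)=1$, $r_4(1)=8$) indeed gives $\alpha=0$, $\beta=-\tfrac13$, hence $r_4(n)=8\sum_{d\divides n,\,4\ndivides d}d$; matching two coefficients suffices precisely because your two Eisenstein combinations are visibly independent. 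The genuine content you defer --- the weight-$2$ modularity of $\theta^4$ on $\Gamma_0(4)$ together with holomorphy at the three cusps, and the dimension count $\dim M_2(\Gamma_0(4))=2$ --- is standard but is exactly where all the work lives, so as written this is an outline resting on quoted facts, much as the paper rests on its citation. Your quaternionic alternative is indeed closer in spirit to the paper's use of the Hamilton quaternions in the Lubotzky--Phillips--Sarnak construction, but be aware that the clean ideal-theoretic count is usually run in the Hurwitz order (which is maximal and has the one-sided ideal theory one needs), not the Lipschitz order $\bZ\langle i,j,k\rangle$; passing between the two orders to recover $r_4(n)$ requires an extra argument.
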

In particular since $p$ is prime, $r_4(p) = 8(p+1)$.  
We are assuming $p\equiv 1\pmod{4}$, so for any four integers whose squares sum to $p$, exactly three are even.  Thus if we take
\begin{equation} \label{defS}
S = \left\{ (x_0,x_1,x_2,x_3)\in \bZ^4 \mid x_0>0\mbox{ odd, }\sum x_j^2= p \right\},
\end{equation}
then $\# S = p+1$.

Next we claim (again using $p\equiv 1\pmod{4}$) that there exists $\epsilon\in \bZ_p\subseteq \bQ_p$ so that $\epsilon^2 = -1$.  Indeed it is well-known that such an $\epsilon$ exists in $\bZ/p$, and by the Hensel Lemma it can be lifted to $\bZ_p$.
For every $\alpha\in S$, associate the matrix
\begin{equation} \label{defmatrix}
\tilde\alpha = \Mtwo{x_0+\epsilon x_1}{x_2 + \epsilon x_3}{-x_2+\epsilon x_3}{x_0-\epsilon x_1}\in M_2(\bQ_p).
\end{equation}
Note that $\det(\tilde\alpha)= p$; we abuse notation by also thinking of $\tilde\alpha$ as an element of $\pgl_2(\bQ_p)$.  Let $\Gamma$ be the subgroup generated by $\tilde S = \{\tilde\alpha\mid \alpha\in S\}$.

\begin{lemma}
  $\Gamma$ is a discrete cocompact subgroup of $\pgl_2(\bQ_p)$.
\end{lemma}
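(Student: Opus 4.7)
The plan is to realize $\Gamma$ as a subgroup of a standard arithmetic lattice in $\pgl_2(\bQ_p)$ coming from the rational Hamilton quaternion algebra, and then invoke the classical theory of arithmetic subgroups of rationally anisotropic division algebras.

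First I would check that the formula \eqref{defmatrix} defines a $\bQ$-algebra embedding $\iota\co H \hookrightarrow M_2(\bQ_p)$, where $H = \bQ\langle 1,i,j,k\rangle$ is the Hamilton quaternion algebra over $\bQ$ with reduced norm $N(\alpha) = x_0^2+x_1^2+x_2^2+x_3^2$. Since $p$ is odd and $-1$ has a square root $\epsilon$ in $\bZ_p$, the algebra $H$ splits at $p$ and $\iota$ extends to an isomorphism $H \otimes_\bQ \bQ_p \cong M_2(\bQ_p)$. The key identity is $\det\iota(\alpha) = N(\alpha)$, so that each generator $\tilde\alpha$ with $\alpha \in S$ has determinant $p$ and in particular lies in the integral-away-from-$p$ arithmetic group $\Lambda := H(\bZ[1/p])^\times$. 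Letting $\bar\Lambda$ be the image of $\Lambda$ modulo scalars in $\pgl_2(\bQ_p)$, we have $\Gamma < \bar\Lambda$ (identifying $\Gamma$ with its image in $\pgl_2(\bQ_p)$).

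Discreteness would then follow from a standard argument: $H$ ramifies at the infinite place, so $(H \otimes \bR)^\times / \bR^\times$ is compact (isomorphic to $SO(3)$). The diagonal embedding $\bar\Lambda \hookrightarrow (H \otimes \bR)^\times/\bR^\times \times \pgl_2(\bQ_p)$ has discrete image, since $H(\bZ[1/p])$ is a discrete $\bZ$-submodule of $H(\bR) \times H(\bQ_p)$. Projecting to the second factor kills only the compact first factor, hence preserves discreteness; so $\bar\Lambda$, and therefore $\Gamma$, is discrete in $\pgl_2(\bQ_p)$.

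For cocompactness I would invoke the classical theorem that the same diagonal embedding is cocompact, which uses that the norm form on $H$ is anisotropic over $\bQ$ (so there are no unipotents to worry about); this is the standard arithmetic input of the LPS construction, see \cite[Section 7.3]{Lubotzky94}. Projecting out the compact real factor makes $\bar\Lambda$ a cocompact lattice in $\pgl_2(\bQ_p)$. It then remains to check that $\Gamma$ has finite index in $\bar\Lambda$, which is equivalent to showing $\Gamma$ acts on the Bruhat--Tits tree $T_{p+1}$ with finitely many vertex orbits. A direct matrix computation, using $\det\tilde\alpha = p$ and the explicit form of $\iota$, shows that the $p+1$ generators $\tilde\alpha$ send the basepoint $v_0 = [\pgl_2(\bZ_p)]$ bijectively to its $p+1$ distinct neighbors, so $\Gamma \cdot v_0$ contains every vertex at even distance from $v_0$ and the quotient has at most two vertices.

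The main obstacle is the cocompactness step, which genuinely relies on the fact that $H$ is a division algebra over $\bQ$ and is not a formal consequence of discreteness. The cleanest route is to cite the classical result rather than rederive it; the remaining pieces---constructing $\iota$, checking discreteness, and matching the $\tilde\alpha$ with tree-neighbors of $v_0$---are explicit calculations using the Hensel-lifted $\epsilon$ and the $p$-adic valuation on $M_2(\bQ_p)$.
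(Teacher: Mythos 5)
Your proposal is correct and follows essentially the same route as the paper: the same quaternionic embedding $H(\bZ[\frac1p])\hookrightarrow H(\bR)\times H(\bQ_p)$ gives discreteness via compactness of $H^*(\bR)/\mbox{center}$, and the same key computation---that the $p+1$ generators $\tilde\alpha$ carry the base vertex onto its $p+1$ distinct neighbours (which, as in the paper, ultimately rests on the quaternion arithmetic of \cite[7.3.12]{Lubotzky94})---supplies the arithmetic input. The only difference is packaging: the paper deduces cocompactness directly from that tree computation ($\Gamma$ acts transitively, indeed simply transitively, on vertices, so $\leftQ{T_{p+1}}{\Gamma}$ is a finite bouquet of circles), so your detour through the anisotropy-based cocompactness of the full arithmetic lattice plus a finite-index check is sound but redundant---your finite-orbit computation already yields cocompactness on its own.
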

This is actually a special case of Theorem 7.3.12 of \cite{Lubotzky94}.  Let us explain this special case in some detail.

 Let $H$ be the Hamiltonian quaternion algebra; for a commutative ring $R$, we have
\[ H(R) = \{a_0+a_1 i + a_2 j + a_3 k \mid a_i\in R\},\]
  the associative $R$--algebra generated by symbols $i$, $j$, $k$, satisfying the relations $i^2=j^2=k^2= -1$ and $ij = k = -ji$.  Let $H^*(R)$ be the group of invertible elements in $H(R)$.  

  Since $\bZ[\frac{1}{p}]$ is discrete in $\bR\times \bQ_p$, there are discrete embeddings $H(\bZ[\frac{1}{p}])\hookrightarrow H(\bR)\times H(\bQ_p)$, and 
 $H^*(\bZ[\frac{1}{p}])/\mbox{center}\hookrightarrow (H^*(\bR)/\mbox{center})\times (H^*(\bQ_p)/\mbox{center})$.  Now $H^*(\bR)/\mbox{center} = \RP^3$ is compact, while $H(\bQ_p)\cong M_2(\bQ_p)$ (i.e., $H$ \emph{splits} over $\bQ_p$,  hence $H^*(\bQ_p)/center \cong PGL_2(\bQ_p)$; the map 
\[ a_0+a_1i+a_2j+a_3 k \mapsto \Mtwo{a_0+\epsilon a_1}{a_2 + \epsilon a_3}{-a_2+\epsilon a_3}{a_0-\epsilon a_1} \]
gives the explicit isomorphism).   Since $H^*(\bR)/\mbox{center}$ is compact, the projection of $\Gamma_0 = H^*(\bZ[\frac{1}{p}])/\mbox{center}$ to $H^*(\bQ_p)/\mbox{center} = \pgl_2(\bQ_p)$ gives a discrete subgroup!  (This despite the fact that $\bZ[\frac{1}{p}]$ projected to $\bQ_p$ is dense.)

  Now, our $\Gamma$ is inside the projection, since every $\alpha\in S$ is invertible as an element of $H(\bZ[\frac{1}{p}])$; indeed $\|\alpha\| = \alpha\cdot \bar\alpha = p$ is invertible in $\bZ[\frac{1}{p}]$, and $\alpha^{-1} = \frac{\bar\alpha}{\|\alpha\|}$.  
(In general $\alpha\in H(R)$ is invertible if and only if $\|\alpha\|$ is invertible in the ring $R$.)  One can easily see from \eqref{defS} and \eqref{defmatrix} that $\Gamma \subseteq \Gamma_0(2)$, the mod $2$ \emph{congruence} subgroup  of $\Gamma_0$.  
This all explains why $\Gamma$ is discrete.  But it is also cocompact; in fact $\Gamma = \Gamma_0(2)$, and  every $\tilde\alpha\in \tilde S$ takes the root of the tree (which is the equivalence class of the lattice $\bZ_p^2\subseteq \bQ_p^2$; see \cite{Serre} for this model of the Bruhat-Tits tree) to a sublattice of index $p$ (since $\det(\alpha)=p$) and there are exactly $p+1$ such sublattices -- representing the $p+1$ neighbors of the root vertex.  From this one deduces that $\Gamma$ acts \emph{transitively} on the vertices of the tree $T_{p+1}$. In fact $\Gamma$ acts simply transitively, and is therefore a free group on $\frac{p+1}{2}$ generators. (Note $\tilde{\bar\alpha} = \tilde\alpha^{-1}$ where $\bar\alpha$ is the quaternionic conjugate, and so the image of $S$ is a symmetric subset of $\pgl_2(\bQ_p)$.)   Thus $T_{p+1}$ can be identified with the Cayley graph $\Cay(\Gamma,\tilde S)$. In particular $\leftQ{T_{p+1}}{\Gamma}$ is a bouquet of $k/2$ circles, and hence compact.
\medskip

Now let $\Delta = \Gamma\cap \Aut^+(T_{p+1})$; this is an index-$2$ subgroup of $\Gamma$ which preserves the $2$--coloring of the tree.  Because $\Gamma=\Gamma_0(2)$ is free of rank $\frac{p+1}{2}$, the rank of $\Delta$ is, by the Nielsen--Schreier Theorem, $2(\frac{p+1}{2}-1)+1 = p = k-1$, and there are two orbits of vertices.  In particular, there is an isomorphism $\Psi$  from $\Delta$ to $P_0 = \ker(\bZ/k\ast \bZ/k \to \bZ/k)$ and an equivariant isomorphism from the tree $T_{p+1}$ to the Bass--Serre tree of $\bZ/k\ast \bZ/k$.  In particular, we can find rotund or expansive subgroups of $P_0$ by specifying them in $\Delta$, which we now do.

Let $q\neq p$ be a prime or prime power, so that $\Leg{p}{q} = -1$, i.e., $p$ is not a quadratic residue mod $q$.  As explained in \cite[7.3.12]{Lubotzky94}, in this case $\Gamma_0(2q)$ (the mod $2q$ congruence subgroup) preserves the coloring of the tree, so it lies inside $\Delta$.

Moreover, by \cite[7.3.12]{Lubotzky94}, 
the quotients $\leftQ{T}{\Gamma_0(2q)}$ have the following properties:
\begin{enumerate}
\item They are $k$--regular \emph{Ramanujan} graphs, i.e., $\lambda_1(\leftQ{T}{\Gamma_0(2q)})\geq 1 - \frac{2\sqrt{k-1}}{k}$.
  \item The girth of $\leftQ{T}{\Gamma_0(2q)}$ is at least $\frac{4}{3}\log_p(q)$.
\end{enumerate}
  So for fixed $p$ and $q\to \infty$ we are finished. (Take for example $p=17$, $q = 3^l$ and $l\to\infty$ for a nested family.)

\begin{remark}
  The graphs $\leftQ{T}{\Gamma_0(2q)}$, for $q$ a prime congruent to $1$ mod $4$, are really the same as the Ramanujan graphs presented in Lubotzky--Phillips--Sarnak  \cite{LPS88}.  But one can make use of many other examples, e.g., for $k = p^e+1$, those constructed by Morgenstern \cite{Morgenstern}.
\end{remark}

\subsection{Other constructions.}

Another source of examples is provided by a result communicated to us by Varj\`u \cite{varju_girth_????}.  Let $p$ be a prime, and $k$ an odd divisor of $p-1$ or $p+1$.   Let $K$ be the kernel of a random homomorphism from $P_{ij}\cong \bZ/k\ast \bZ/k$ to $L=SL_2(p)$, and let $\Gamma=\leftQ{T_k}{K}$.

\begin{theorem}[Varj\`u]\label{varju}
There is an absolute constant $c>0$ such that the following holds.  For any odd integer $k$ and for any $\varepsilon>0$, we have
\begin{enumerate}
\item $\mathrm{girth}~\Gamma\geq (1/3-\varepsilon)\frac{\log |\Gamma|}{\log(k-1)}$~, and
\item $\lambda_1(\Gamma)> 1- k^{-c}$~
\end{enumerate}
with probability $1-\varepsilon$, provided $p$ is a sufficiently large prime depending on $k$
and $\varepsilon$ and $k|p-1$ or $k|p+1$.
\end{theorem}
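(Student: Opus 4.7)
The plan is to realise the random homomorphism $\phi\co \bZ/k\ast \bZ/k \to SL_2(p)$ as the choice of two independent, uniformly random elements $A,B\in SL_2(p)$ of order $k$.  Such elements exist precisely because $k$ divides $p\pm 1$, and they lie on a union of regular semisimple conjugacy classes of dimension $2$, so the sample space has $\asymp k^2 p^4$ points.  A standard escape-from-subvariety argument applied to Dickson's classification of maximal subgroups of $SL_2(\mathbb{F}_p)$ shows that $\langle A,B\rangle = SL_2(p)$ with probability $1-o(1)$ as $p\to\infty$; in that event $|\Gamma|=2\,|SL_2(p)|/k\asymp p^3/k$, so $\log|\Gamma|=(3+o(1))\log p$.

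For the girth bound, fix a non-trivial reduced word $w$ in $\bZ/k\ast \bZ/k$ of length $\ell$.  The assignment $(A,B)\mapsto w(A,B)$ is a morphism of algebraic varieties, and because $w$ is non-trivial in the free product, the locus $\{(A,B) \mid w(A,B)=I\}$ is a proper subvariety of the sample space.  The Lang--Weil estimate therefore gives $\Pr[w(A,B)=I]=O(1/p)$, uniformly in $w$ of bounded length.  Since the number of reduced words of length at most $\ell$ in $\bZ/k\ast \bZ/k$ is at most $2(k-1)^\ell$, a union bound yields
\[
\Pr\bigl[\,\exists\, w\text{ of length }\leq \ell\text{ with }w(A,B)=I\,\bigr] \leq O\bigl((k-1)^\ell / p\bigr),
\]
which is less than $\varepsilon$ as soon as $\ell<(1-\varepsilon')\log p/\log(k-1)$.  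Combined with $\log|\Gamma|=(3+o(1))\log p$, this yields $\mathrm{girth}(\Gamma) \geq (1/3-\varepsilon)\log|\Gamma|/\log(k-1)$.  Note that the factor $1/3$ cannot easily be improved by this route, since for example a commutator word already cuts out a codimension-one subvariety.

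For the spectral gap, the plan is to apply the Bourgain--Gamburd machine to the Cayley graph of $SL_2(p)$ with respect to the symmetric generating set $\{A^i\mid 0<i<k\}\cup\{B^i\mid 0<i<k\}$; the Schreier graph $\Gamma$ is a finite cover of this Cayley graph and so inherits its spectral gap.  The three standard inputs are: \emph{non-concentration}, namely that at time $t\asymp \log p$ the random walk assigns mass $\leq |SL_2(p)|^{-\alpha}$ to any proper subgroup, which follows from the girth lower bound above together with Dickson's classification; the \emph{product theorem} of Helfgott, which asserts that any generating set $H\subseteq SL_2(p)$ with $|H|\leq |SL_2(p)|^{1-\delta}$ satisfies $|H\cdot H\cdot H|\geq |H|^{1+c}$; and \emph{quasi-randomness}, namely that every non-trivial complex irreducible representation of $SL_2(p)$ has dimension at least $(p-1)/2$.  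Feeding these into the Bourgain--Gamburd framework yields $\lambda_1(\Gamma)\geq 1-k^{-c_0}$ for an absolute $c_0>0$.

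The main obstacle is the non-concentration input, and specifically obtaining it with constants that are independent of $k$ so that the spectral gap exponent $c$ in the conclusion is absolute.  Concentration in Borel subgroups or in normalizers of tori is controlled by combining the girth bound with trace-counting estimates in $SL_2(p)$, while the exceptional finite subgroups (the preimages in $SL_2(p)$ of $A_4$, $S_4$, $A_5$ in $PSL_2(p)$) have size bounded by an absolute constant and are easily avoided for $p$ large compared to $k$.  Threading the constants through the product theorem and the Bourgain--Gamburd $\ell^2$-flattening iteration to ensure uniformity in $k$ — rather than merely getting an exponent $c=c(k)$ — is the technically most delicate part of the argument.
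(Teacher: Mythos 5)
First, a point of comparison: the paper contains no proof of this statement at all --- it is attributed to Varj\`u and cited as work in preparation --- so your sketch can only be judged on its own merits, not against an argument of the authors.

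On those terms there are genuine gaps in both halves. For the girth bound, you invoke Lang--Weil to get $\Pr[w(A,B)=I]=O(1/p)$ ``uniformly in $w$ of bounded length,'' but your union bound runs over all reduced words of length up to $\ell\approx \log p/\log(k-1)$, which is unbounded as $p\to\infty$. The implied constants in Lang--Weil depend on the degree and complexity of the locus $\{w(A,B)=I\}$, which grow with $\ell$; worse, you must know that this locus is a \emph{proper} subvariety of the variety of pairs of order-$k$ elements over $\overline{\mathbb{F}}_p$ for \emph{every} such word. Properness in characteristic zero (via ping-pong) transfers to characteristic $p$ only for a fixed word and $p$ large, and making this uniform over words of length $c\log p$ requires an effective height argument (in the spirit of Gamburd--Hoory--Shahshahani--Shalev--Vir\'ag on girths of random Cayley graphs): one needs an explicit pair of order-$k$ matrices over a number ring generating $\bZ/k\ast\bZ/k$ whose word values have heights growing only exponentially in $\ell$, so that nonvanishing survives reduction mod $p$ in the stated range. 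That quantitative step is the real content of the girth claim and is absent. For the spectral bound, the assertion that the coset graph $\Gamma$ ``is a finite cover of the Cayley graph and so inherits its spectral gap'' is incorrect: it is not a cover (the valences are $k$ and $2k-2$), and in any case spectra of quotients embed in spectra of covers, so gaps pass from cover to base, not the other way. The correct reduction is to a spectral gap for the operator built from the uniform measures on $\langle A\rangle$ and $\langle B\rangle$ acting on $\ell^2_0(SL_2(p))$ --- i.e.\ genuinely a statement about coset graphs, which is presumably why that is the title of Varj\`u's paper. More importantly, Bourgain--Gamburd as usually run produces a gap whose exponent depends on the generating data, whereas the theorem requires $\lambda_1>1-k^{-c}$ with $c$ absolute, a gap that \emph{improves} with the valence $k$; you explicitly defer exactly this uniformity as ``the technically most delicate part'' rather than prove it. So as written the proposal establishes neither item at the stated strength, though the overall strategy (word-variety counting for girth, expansion in $SL_2(p)$ for the gap) is a reasonable guess at how such a theorem would be proved.
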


By fixing $\varepsilon<\frac{1}{3}$, taking $k$ large enough that $1-k^{-c}>\frac{1}{2}$, and letting $p$ tend to infinity, we obtain examples which are \emph{extended} $k$--fold triangle groups.  Applying Theorem \ref{thm:rotundandT}, these give many more examples to show the answer to Question \ref{q:smsqt} is `no'.

\begin{remark}
Another construction of negatively curved triangle complexes with prescribed links is provided by \cite[Theorem 2]{BS97}. It is possible that these can also be thought of as Dehn fillings of virtually free groups, in which case these would provide another route to answering Question \ref{q:smsqt}.
\end{remark}

\bibliographystyle{alpha}

\end{document}